\begin{document}

\newcommand{\D}{\mathbb{D}}
\newcommand{\HS}{\mathbb{H}}

\newtheorem{theorem}{Theorem}[section]
\newtheorem{lemma}[theorem]{Lemma}
\newtheorem{corollary}[theorem]{Corollary}

\title{On a Theorem of Peter Scott}

\author{Priyam Patel}
\address{Department of Mathematics, Rutgers University, Piscataway, New Jersey 08854}
\email{priyamp@math.rutgers.edu}
\thanks{The author was supported by the Graduate Assistance in Areas of National Need (GAANN) Fellowship}

\subjclass[2010]{Primary 57M05, 57M10; Secondary 20E26, 57M50}

\begin{abstract}
 We quantify Peter Scott's Theorem that surface groups are locally extended residually finite (LERF) in terms of geometric data. In the process, we will quantify another result by Scott that any closed geodesic in a surface lifts to an embedded loop in a finite cover.
\end{abstract}

\maketitle
\section{Introduction}
A group $G$ is \emph{residually finite} (RF) if for every nontrivial element $g \in G$, there exists a finite index subgroup $G'$ of $G$ such that $g \notin G'$. A group $G$ is called \emph{locally extended residually finite} (LERF) if for any finitely generated subgroup $S$ of $G$ and any $g \in G -S$, then $G$ has a finite index  subgroup $G'$ which contains $S$ but not $g$. 

In recent years, there has been significant work done in an effort to quantify residual finiteness. In particular, for free groups Buskin \cite{Buskin},  Bou-Rabee \cite{Bou1} and Rivin \cite{Rivin} give upper bounds for the index of $G'$ in terms of the word length of the element $g$. Additionally, in the case of nonabelian free groups Bou-Rabee and McReynolds \cite{Bou2} and Kassabov and Matucci \cite{Kass} give lower bounds for the index of the subgroup $G'$, again in terms of word length.

In \cite{Scott}, Peter Scott shows that surface groups are LERF. The goal of this paper is to improve Scott's results by giving an estimate on the index of the subgroup $G'$ in terms of geometric data. The flavor of this paper is rather different from the work on quantifying residual finiteness cited above, namely in the significant use of hyperbolic geometry to quantify residual finiteness and LERF-ness for surface groups. 

Peter Scott also shows in \cite{Scott} that any closed geodesic in a surface $\Sigma$ lifts to an embedded loop in a finite cover of $\Sigma$. The first result in this paper is the following theorem, which quantifies the above result.

\begin{theorem}
Let $\Sigma$ be a compact surface with or without boundary of negative Euler characteristic. Then there exists a hyperbolic metric on $\Sigma$ so that any closed geodesic of length $\ell$ lifts to an embedded loop in a finite cover whose index is bounded by $16.2 \, \ell$. 
\end{theorem}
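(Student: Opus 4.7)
The plan is to equip $\Sigma$ with a hyperbolic metric coming from a pair-of-pants decomposition in which every pair of pants is subdivided, along three perpendicular seams, into two isometric right-angled hexagons, with all hexagons across the decomposition chosen congruent. Under this metric $\mathbb{H}^2$ is tiled by congruent right-angled hexagons, and $\pi_1(\Sigma)$ embeds as a finite-index subgroup of the reflection group $W$ generated by reflections in the sides of a single tile. The geometry of the hexagon is controlled by a free length parameter that will be optimized at the end to pin down the constant.

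Given $\gamma$, lift it to a bi-infinite geodesic $\tilde\gamma \subset \mathbb{H}^2$ with axial translation $g \in \pi_1(\Sigma)$ of length $\ell$. The first quantitative step is to bound the number $N$ of tiles that $\tilde\gamma$ meets in one $\langle g \rangle$-period: each crossing segment has length at least an explicit $d > 0$ depending only on the hexagon, giving $N \leq \ell/d + O(1)$. Then form the tile-neighborhood $X \subset \mathbb{H}^2$ of $\tilde\gamma$ (the union of its crossed tiles) and let $Y = X / \langle g\rangle$. Then $Y$ is a compact convex union of at most $N$ hexagons with right-angled piecewise-geodesic boundary, containing $\gamma$ as an embedded loop.

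The heart of the proof is the construction of a finite cover $\hat\Sigma \to \Sigma$ containing $Y$ without reintroducing self-intersections of $\gamma$. Since $Y$ is a convex subcomplex of the hexagon tiling with right-angled corners, a finite sequence of reflections across boundary sides in $W$ closes $Y$ up combinatorially into a fundamental domain for a finite-index subgroup $W' \leq W$ containing $\langle g \rangle$; intersecting $W'$ with $\pi_1(\Sigma)$ then yields $\hat\Sigma$. The index of the resulting cover is bounded by a uniform multiple of $N$, and hence by $(C/d)\ell$ for an explicit constant $C$.

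The main obstacle will be to make this combinatorial closure efficient, with a universal constant $C$ small enough that, after optimizing the hexagon side length, $C/d \leq 16.2$; this trade-off between tile size (which increases $d$) and the inefficiency of the closure step (which grows with tile size) is where the delicate hyperbolic trigonometry lives. A secondary subtlety is the boundary case: when $\Sigma$ itself has boundary, one must ensure the hexagon decomposition respects $\partial\Sigma$ and that $\hat\Sigma$ remains an honest cover rather than, say, an orbifold cover.
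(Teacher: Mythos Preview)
Your overall architecture---embed $\pi_1(\Sigma)$ in the reflection group of a right-angled tile, take the tile-neighborhood of a lift of $\gamma$, and close up via the Poincar\'e Polygon Theorem---matches the paper's (which uses the regular right-angled pentagon rather than hexagons). But two of your intermediate claims fail, and repairing them is precisely where the paper's work lies.

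First, the tile-neighborhood $Y$ is \emph{not} convex: four right-angled tiles meet at each vertex, and when $\tilde\gamma$ passes through three of them around a vertex you get an interior angle of $3\pi/2$ on $\partial Y$. Without convexity the Poincar\'e Polygon Theorem does not apply and your reflection-closure step has no fundamental domain to act on. The paper's Section~3 (Lemma~3.2, Theorem~3.1) shows that such bad corners are never consecutive along $\partial S_0$, and hence that the convexification is achieved by adding only tiles that already touch $S_0$; this is the key geometric lemma your proposal skips. Second, your tile count $N \le \ell/d + O(1)$ requires a positive lower bound $d$ on the length of a crossing segment, but a geodesic entering and leaving a right-angled tile through adjacent sides can clip the corner in an arbitrarily short arc, so no such $d>0$ exists. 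The paper sidesteps this by counting via area: once the convexification is known to sit inside the $2d_0$-neighborhood of the geodesic (with $d_0$ the tile diameter), one has $k\cdot(\pi/2) \le 2\ell\sinh(2d_0)$, and for the regular right-angled pentagon this yields $k < 16.2\,\ell$. The constant is not the output of an optimization over a free shape parameter; it is forced by the geometry of that specific tile.
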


The idea for the proof of this theorem came from \cite{Scott}. We tessellate the hyperbolic plane by regular, right-angled pentagons as in \cite{Scott}, which will induce a tessellation on $\Sigma$ and on any cover of $\Sigma$. For any compact subsurface $S$ of a surface tessellated by these pentagons, we are able to estimate the area of the smallest, closed, convex union of pentagons $Y$ containing $S$. Using the upper bound on the area of $Y$ and some other geometric results, we obtain the bound of Theorem 1.1 in Section 5.

Our main result, Theorem 7.1, quantifies Peter Scott's LERF theorem. The statement is fairly complicated, but a special case of it is the RF case stated below. 

\begin{theorem}
Let $\Sigma$ be a compact surface of negative Euler characteristic. There exists a hyperbolic metric on $\Sigma$ so that for any $\alpha \in \pi_1(\Sigma)-\{id\}$, there exists a subgroup $H'$ of $\pi_1(\Sigma)$, such that $\alpha \notin H'$. The index of $H'$ is bounded by $32.3\,\ell$, where $\ell$ is the length of the unique geodesic representative of $\alpha$.
\end{theorem}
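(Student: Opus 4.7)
The plan is to imitate the construction behind Theorem~1.1, but to build a cover in which the lift of the geodesic representative of $\alpha$ is an embedded \emph{arc} rather than an embedded loop. Any such cover will automatically correspond to a subgroup of $\pi_1(\Sigma)$ not containing $\alpha$, since a closed lift is precisely what it would mean for $\alpha$ to be in the subgroup.

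Specifically, letting $\gamma$ denote the closed geodesic representative of $\alpha$, I would fix the right-angled pentagonal hyperbolic metric on $\Sigma$ from Theorem~1.1, and in the universal cover $\mathbb{H}^2$ lift $\gamma$ to a geodesic segment $\tilde\gamma$ of length $\ell$ with endpoints $x$ and $\alpha\cdot x$, which are distinct because $\alpha\neq\mathrm{id}$. I would then take $Y\subset\mathbb{H}^2$ to be the smallest closed convex union of pentagons of the tessellation containing $\tilde\gamma$, plus a small additional buffer ensuring that $x$ and $\alpha\cdot x$ both sit in the interior of $Y$. The key estimate, namely the area bound developed in Section~5, should give that $Y$ contains at most $32.3\,\ell$ pentagons---roughly twice the pentagon count in the convex hull used for Theorem~1.1, reflecting that we must now accommodate both endpoints separately rather than identifying them.

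Next, I would construct a finite cover $p:\Sigma'\to\Sigma$ by taking $Y$ and folding its boundary pentagons to one another using the same kind of pairing used in the proof of Theorem~1.1, yielding a closed (or bordered) hyperbolic surface whose pentagonal tessellation extends that of $Y$. The cover $\Sigma'$ then has degree equal to the number of pentagons in $Y$, i.e.\ at most $32.3\,\ell$, and $Y$ embeds isometrically into it. Because the two endpoints $x$ and $\alpha\cdot x$ of $\tilde\gamma$ both lie in $Y$ and are distinct, they project to distinct points of $\Sigma'$; the lift of $\gamma$ to $\Sigma'$ beginning at the image of $x$ would therefore terminate at a different point, so $\alpha\notin p_*\pi_1(\Sigma')=:H'$, as required.

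The hardest part will be verifying the area estimate for $Y$ with the sharp constant $32.3$. This will demand a careful examination of how the convex hull of a geodesic \emph{segment}, as opposed to a closed geodesic loop, behaves under the pentagonal tessellation, and in particular how the extra buffer separating $x$ from $\alpha\cdot x$ inflates the pentagon count relative to the closed-loop count of Theorem~1.1. A secondary technical point is to show that the boundary pairings of $Y$ actually glue into a bona fide finite-sheeted cover of $\Sigma$ (rather than, for example, producing a surface of a different genus or failing to extend off $Y$), which is the step where Scott's original combinatorial trick with right-angled pentagons enters.
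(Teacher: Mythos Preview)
Your overall strategy---build a convex union of pentagons in $\mathbb{D}$ containing a lifted arc of $\alpha$ with an endpoint in its interior, and use this to produce a finite-index subgroup not containing $\alpha$---matches the paper's. But the mechanism you propose for passing from the convex polygon $Y$ to a finite-index subgroup of $\pi_1(\Sigma)$ is the weak point, and it is not what the paper does.

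You describe ``folding boundary pentagons to one another using the same kind of pairing used in the proof of Theorem~1.1'' to produce a finite cover $\Sigma'\to\Sigma$. There is no such pairing in the paper. Neither in Theorem~1.1 nor here does the paper glue a closed surface out of $Y$. Instead, the paper lets $H$ be the group generated by reflections in the sides of the convex right-angled polygon $Y'\subset\mathbb{D}$; since every side of $Y'$ is a line of the pentagonal tessellation, $H<\Gamma$. By the Poincar\'e Polygon Theorem, $Y'$ is a fundamental domain for $H$, so $[\Gamma:H]$ equals the number of pentagons in $Y'$. The subgroup one wants is then $H'=H\cap\pi_1(\Sigma)$, and Poincar\'e's index inequality gives $[\pi_1(\Sigma):H']\le[\Gamma:H]$. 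The point is that $\mathbb{D}/H$ is an orbifold, not a cover of $\Sigma$; one never builds $\Sigma'$ directly. Your folding description, besides being vague, would have to explain why an arbitrary side-pairing on a convex pentagonal polygon yields a surface that covers $\Sigma$ in particular---and in general it will not.

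Two smaller points. First, the paper obtains the factor of two cleanly: it takes the convexification $\overline Y$ from Theorem~1.1 (with $k$ pentagons) and adjoins a single translate $\overline Y_1$ sharing one boundary edge, so that $Y'=\overline Y\cup\overline Y_1$ is convex with $2k$ pentagons and one endpoint of $\widetilde\alpha$ lies in its interior. Your ``small additional buffer'' would need a separate area argument. Second, only \emph{one} endpoint needs to be in the interior of $Y'$: since $Y'$ is a fundamental domain for $H$, an interior point has trivial $H$-stabilizer and cannot be identified with the other endpoint, so $b(\widetilde\alpha)$ is not a loop and $\alpha\notin H$.
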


The proof of Theorem 1.2 relies on the Poincar\'{e} Polygon Theorem (see Chapter 9 of \cite{Beardon}), which will explain the significance of obtaining the convex space $Y$ described above. The proof of Theorem 7.1 will be a natural extension of the proof of Theorem 1.2.

We should note that in \cite{Scott}, there is a gap in Peter Scott's argument that surface groups are LERF. He addresses and fills in this gap in his paper \cite{Scott2}. In our proof of Theorem 7.1, we will make use of the Neilson convex region of a surface, also called the convex core, as Scott does in \cite{Scott2} to avoid the gap in his original paper.

Throughout the paper, we will make use of several standard results of hyperbolic geometry. See \cite{Hubbard}, \cite{Katok}, \cite{Beardon} and \cite{Casson} for details. 

\vspace{.15 in}
{\bf Acknowledgement.} This work was supported by the Graduate Assistance in Areas of National Need (GAANN) Fellowship. The author would like to sincerely thank Feng Luo for all of his support, insight and invaluable suggestions. The author would also like to thank Peter Scott and Ian Agol for reading early drafts of this paper and for their much appreciated feedback. Additionally, the author would like to thank Justin Bush and David Duncan for useful discussions and for their guidance throughout the writing of this paper. Finally, the author would like to thank the referee for being incredibly thorough, and for helpful comments and suggestions that have improved the quality of this paper.

\section{Preliminaries}

We let $\mathbb{D}$ denote the Poincar\'{e} disc model of hyperbolic 2-space and let $\mathbb{H}$ denote the Poincar\'{e} half-plane model.

Following \cite{Scott}, we let $P \subset \mathbb{D}$ be a regular, right-angled pentagon. Let $\Gamma$ be the group of isometries of $\mathbb{D}$ generated by reflections in the five sides of $P$. By the Poincar\'{e} Polygon Theorem \cite{Beardon}, $P$ is a fundamental domain for the action of $\Gamma$ on $\mathbb{D}$, and the images of $P$ under $\Gamma$ tessellate $\D$. Let $T = \{ gP : g \in \Gamma\}$ be the tessellation.

Let $F = \mathbb{R} P^2 \, \#\, \mathbb{R} P^2 \,\#\, \mathbb{R} P^2$. In his paper \cite{Scott}, Scott shows that there exists a fundamental domain for the action of $\pi_1(F)$ on $\mathbb{D}$ consisting of four regular, right-angled pentagons whose sides have been identified in such a way that $\pi_1(F) < \Gamma$. Therefore, $F$ can be tiled by these regular, right-angled pentagons.  He then shows that every closed surface $\Sigma$ of negative Euler characteristic covers $F$. That is, there exists a covering map $r: \Sigma \longrightarrow F$ and an induced map $r_*: \pi_1(\Sigma) \longrightarrow \pi_1(F)$ on their fundamental groups.  $r_*$ is injective as an induced map on $\pi_1$ of a covering map, and therefore, $\pi_1(\Sigma) < \pi_1(F) < \Gamma$. This tells us that there exists a fundamental domain for the action of $\pi_1(\Sigma)$ on $\mathbb{D}$ preserving the tiling, and thus, $\Sigma$ can also be tiled by these pentagons. In fact, the argument above shows that any cover of $F$ can be tiled by such pentagons.

Pulling back the metric induced by the tiling on $F$ via the covering map $r$ gives us a hyperbolic metric on $\Sigma$, which we will call the \emph{standard metric} throughout the paper. All of our results will be for surfaces endowed with this standard metric. The fact that the standard metric is a hyperbolic metric is key. In his paper \cite{Scott2}, Peter Scott demonstrates the gap in his original paper \cite{Scott} with a counterexample for his argument in the Euclidean case. The special properties of hyperbolic space are precisely what allows his revised argument and all of our arguments to work. 

One should note that for a hyperbolic surface $\Sigma$, $\pi_1(\Sigma)$ acts on $\D$ as the deck transformation group for the universal covering space. Therefore, the elements of $\pi_1(\Sigma)$ are isometries of $\D$. If $\alpha \in \pi_1(\Sigma)$, we make a slight abuse of notation and, as a convention, will call the unique geodesic representative in this homotopy class $\alpha$ as well. 

Let $\Sigma$ be a closed hyperbolic surface, tiled by regular, right-angled pentagons, and let $\alpha \in \pi_1 (\Sigma)-\{id\}$. Let $X$ be the cover of $\Sigma$ corresponding to $\langle \alpha \rangle$, the cyclic subgroup of $\pi_1 (\Sigma)$ generated by $\alpha$. Since $\pi_1 (X) \cong \langle \alpha \rangle \cong \mathbb{Z}$, $X$ must be an open annulus or an open M\"{o}bius band depending on if $\alpha$ is an orientation preserving or orientation reversing hyperbolic isometry. In both cases, there exists a lift of $\alpha$ that is the unique simple closed geodesic in $X$, which we will call $\overline \alpha$.

We then have the following sequence of covering maps:

\begin{center} $\D \stackrel{q}{\longrightarrow} X \stackrel{p}{\longrightarrow} \Sigma \stackrel{r}{\longrightarrow}F$ \end{center}

Since $X$ is a cover of $F$, the argument above shows that $X$ can be tiled by regular, right-angled pentagons.

\section{Convexification}

\noindent {\bf Definition.} Let $N$ be a subsurface of a hyperbolic surface $M$. $N$ is \emph{convex} if  for every path $\gamma \subset N$, the geodesic $\gamma^*$ homotopic  rel endpoints to $\gamma$ is also contained in $N$. $N$ is \emph{locally convex} if each point $x \in N$ has a neighborhood in $N$ isometric to a convex subset of $\HS$.

\vspace{.2 in}

\noindent {\bf Definition.} Let $N$ be a subsurface of a hyperbolic surface $M$ tiled by regular, right-angled pentagons. The \emph{convexification} of $N$ is the smallest, closed, convex union of pentagons in $M$ that contains $N$.

\vspace{.15 in}
Recall that $q$ is the covering map $q: \mathbb{D} \longrightarrow X$. Let $S = q(T)$, so that $S$ consists of the pentagons that tile $X$. Let $S_0 \subset S$ be the union of all pentagons $P_i \in S$ such that $P_i \cap \overline \alpha \neq \varnothing$. We choose the basepoint, $a$, of $\overline \alpha$ to be on a geodesic edge of a pentagon in $S_0$ for reasons that will become obvious later. Our first goal will be to convexify, i.e. obtain the convexification of, $S_0$, which we do by adding pentagons along $\partial S_0$ in order to cure the non-convex portions.We obtain a locally convex subsurface $Y$ consisting of a union of pentagons in our tiling of $X$. In section 8.3 of his notes \cite{Thurston}, Thurston shows that for a complete hyperbolic manifold, local convexity implies convexity and so $Y$ will be the desired convexification of $S_0$. In this section, we aim to prove the following theorem:

\begin{theorem}
We can convexify $S_0$ by adding pentagons in our tiling of $X$, so that any pentagon added has non-empty intersection with $S_0$.
\end{theorem}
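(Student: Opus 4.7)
The plan is to lift the problem to the universal cover $\D$ of $X$. The preimage of $S_0$ under $q \colon \D \to X$ is the $\langle\alpha\rangle$-invariant union $\tilde{S_0}$ of tessellation pentagons meeting the geodesic axis $\tilde{\alpha}$ of $\alpha$. By Thurston's result (Section 8.3 of \cite{Thurston}), in a complete hyperbolic manifold local convexity implies convexity; hence it suffices to produce a locally convex, $\langle\alpha\rangle$-invariant union of pentagons $\tilde{Y} \supseteq \tilde{S_0}$ and verify there that every pentagon of $\tilde{Y}\setminus\tilde{S_0}$ meets some pentagon of $\tilde{S_0}$. The statement in $X$ then follows by projecting through $q$.

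Because the pentagons are right-angled, exactly four tessellation pentagons meet at every vertex of $\D$, each contributing an angle of $\pi/2$. Thus each vertex on $\partial\tilde{S_0}$ is incident to one, two, or three pentagons of $\tilde{S_0}$, producing interior angles of $\pi/2$, $\pi$, or $3\pi/2$; local convexity fails precisely at the concave $3\pi/2$-vertices. (The degenerate case of two opposite pentagons meeting at a vertex is ruled out by the fact that $\tilde{\alpha}$ is a single geodesic and so cannot pass from one pentagon to a non-adjacent one without traversing a pentagon between them.) I would resolve the concavities iteratively: at each concave vertex $v$ of the current set, add the unique fourth tessellation pentagon $P_v$ at $v$. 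Every first-round addition $P_v$ shares the two tessellation edges at $v$ with pentagons of $\tilde{S_0}$, and so meets $\tilde{S_0}$.

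The main task is to check that this property persists through subsequent rounds, since adding $P_v$ can introduce new concave vertices at the other vertices of $P_v$. I would prove by induction on the round number that each added pentagon meets $\tilde{S_0}$. When a new concave vertex $u$ is one of the two vertices of $P_v$ adjacent to $v$, the observation is immediate: the edge $vu$ of $P_v$ is shared with a pentagon $P_i\in\tilde{S_0}$, so $P_i$ is itself incident to $u$ and the fourth pentagon added at $u$ shares $u$ with $P_i$. The subtler situation is when $u$ is one of the two vertices of $P_v$ not adjacent to $v$, which requires a careful case analysis at $u$ using the local cyclic order of the four tessellation pentagons meeting there.

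The principal obstacle is controlling this last subcase, in which a priori the three pentagons of the current set at $u$ could all be added pentagons (none in $\tilde{S_0}$), and then the fourth pentagon at $u$ need not touch $\tilde{S_0}$ directly. I plan to rule this out by exploiting the tubular geometry of $\tilde{S_0}$ around the convex geodesic $\tilde{\alpha}$: since $\tilde{S_0}$ lies within one pentagon-diameter of $\tilde{\alpha}$, the outer edges of any first-round pentagon $P_v$ face into the complement of $\tilde{S_0}$ in $\D$, so the pentagons across those outer edges are at greater distance from $\tilde{\alpha}$ and do not meet it. Propagating this distance control inductively along the rounds, together with the rigid combinatorics of the right-angled tessellation, forces at least one pentagon of $\tilde{S_0}$ to be incident to every concave vertex that appears during the procedure, completing the induction and showing that $\tilde{Y}$ is contained in the first combinatorial neighborhood of $\tilde{S_0}$.
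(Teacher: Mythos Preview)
Your overall strategy---lift to $\D$, fill each concave $3\pi/2$ vertex with the unique missing fourth pentagon, and verify that every pentagon added meets $\tilde{S_0}$---is the same as the paper's, and your first-round observation is correct. The gap lies exactly where you yourself flag the ``principal obstacle'': the claim that ``propagating this distance control inductively \dots\ forces at least one pentagon of $\tilde{S_0}$ to be incident to every concave vertex that appears'' is asserted but never carried out, and the distance heuristic as written does not close it. A non-adjacent vertex $u$ of a first-round pentagon $P_v$ can sit at distance up to $2d_0$ from $\tilde\alpha$, so the inclusion $\tilde{S_0}\subset\{d(\,\cdot\,,\tilde\alpha)\le d_0\}$ does not by itself place $u$ inside any pentagon of $\tilde{S_0}$; nor is it automatic that the three ``outer'' edges of $P_v$ face into the complement of $\tilde{S_0}$. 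Note also that the containment $Y\subset\{d(\,\cdot\,,\bar\alpha)\le 2d_0\}$ you are implicitly heading toward is established in the paper \emph{as a consequence} of this theorem (Theorem~4.3), so it cannot serve as an input here.

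What the paper actually uses to resolve this is a concrete combinatorial fact, Lemma~3.2: two bad corners never occur consecutively along $\partial S_0$. It is proved by lifting a putative pair of consecutive bad corners to $\D$ and observing that no geodesic can hit the required pentagons while avoiding the intervening ones. With this lemma the entire procedure terminates in at most two rounds and admits an explicit two-case analysis: if successive bad corners are separated by two or more good corners, the first-round fill already leaves that stretch locally convex; if they are separated by exactly one good corner $G$, the two first-round pentagons create a single new concave vertex, and the one additional pentagon $Q$ filling it still meets $S_0$ at the point $G$. This case split is precisely what replaces your unproved inductive step. To repair your argument you should prove and invoke Lemma~3.2 (or an equivalent constraint on the pattern of bad corners) rather than appeal to a distance bound that is downstream of the theorem you are trying to prove.
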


Recall that the pentagons of $S$ have all angles equal to $\frac{\pi}{2}$. Therefore, $S_0$ can fail to be convex if three pentagons of $S_0$ form an angle of $\frac{3\pi}{2}$ at a vertex on $\partial S_0$. Points of $\partial S_0$ with interior angles equal to $\pi$ will not be referred to as vertices of $S_0$. Thus, all of the vertices of $S_0$ either have interior angle equal to $\frac{3\pi}{2}$ or $\frac{\pi}{2}$. 

\vspace{.15 in}

\noindent {\bf Definition.} If a vertex, $v$, of $S_0$ has interior angle $\frac{3\pi}{2}$, we will call $v$ a \emph{bad corner}. If a vertex, $v$, of $S_0$ has interior angle $\frac{\pi}{2}$, we will call $v$ a \emph{good corner}. 

\vspace{.15 in}

\noindent{\bf Definition.} Choosing an orientation for each boundary component of $\partial S_0$, we will say that corners of  $\partial S_0$ are \emph{consecutive} if they occur consecutively with respect to the chosen orientation.

\vspace{.15 in}

Our results will be independent of the choice of orientation in this definition. 

\vspace{.15 in}

In order to obtain the convexification we must first understand how bad the boundary components of $S_0$ can be. We quantify how bad the boundary is by how many consecutive bad corners occur along it. 

\begin{lemma}
Two bad corners never occur consecutively on a boundary component of $S_0$.
\end{lemma}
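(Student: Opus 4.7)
Plan: I would argue by contradiction. Suppose $v_1, v_2$ are two consecutive bad corners on a component of $\partial S_0$. Since no corner lies strictly between them, the arc $\sigma\subset\partial S_0$ joining them is a geodesic segment composed of $k\ge 1$ pentagon edges, flanked by a strip of $k$ pentagons of $S_0$ on the inside and a strip of $k$ pentagons not in $S_0$ on the outside. Denote by $Q_i$ the outside pentagon containing $v_i$ (so $Q_1=Q_2$ when $k=1$) and by $M_i$ the pentagon diagonally opposite $Q_i$ at the vertex $v_i$. Lifting to the universal cover $\mathbb{D}$, the axis $\tilde\alpha$ of the hyperbolic isometry $\alpha$ is a lift of $\overline\alpha$, and a pentagon lies in $\tilde S_0$ if and only if it meets $\tilde\alpha$. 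At each $\tilde v_i$, the exceptional pentagon $\tilde Q_i$ is disjoint from $\tilde\alpha$, while the other three pentagons (including $\tilde M_i$, which sits in the quadrant diagonally opposite $\tilde Q_i$) all meet $\tilde\alpha$.

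The crux of the argument is to show that the resulting configuration is geometrically untenable. At each $\tilde v_i$, the pentagon $\tilde Q_i$ has two edges meeting at $\pi/2$: the edge on $\tilde\sigma$ and a perpendicular edge carried by a geodesic $\tilde e_i$ through $\tilde v_i$. The pentagon $\tilde M_i$ lies in the quadrant opposite $\tilde Q_i$, and the other two $\tilde S_0$ pentagons at $\tilde v_i$ sit across $\tilde\sigma$ and across $\tilde e_i$ from $\tilde Q_i$. Thus, to realize the three $\tilde S_0$ pentagons at $\tilde v_i$ while missing $\tilde Q_i$, $\tilde\alpha$ must reach the $S_0$-side of $\tilde\sigma$ near each $\tilde v_i$, cross certain specific half-planes cut out by $\tilde e_i$, and yet avoid the entire non-$S_0$ strip of $k$ pentagons on the other side of $\tilde\sigma$ (which contains $\tilde Q_1$ and $\tilde Q_2$). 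I would track these half-plane constraints at $\tilde v_1$ and $\tilde v_2$ and use the fact that $\tilde\alpha$ is a single geodesic line to show that these conditions force $\tilde\alpha$ to cross $\tilde\sigma$ at an interior point of $\sigma$; such a crossing would push $\tilde\alpha$ into one of the $\tilde Q_i$'s, contradicting $\tilde Q_i\cap\tilde\alpha=\emptyset$.

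The main obstacle I anticipate is ruling out the possibility that $\tilde\alpha$ satisfies its constraints by ``wrapping around'' the non-$S_0$ strip on the far side, since a priori a hyperbolic geodesic has enough flexibility to avoid an obstacle on multiple sides. Making the forced-crossing step precise will require quantitative hyperbolic geometry: I expect to use the bounded diameter of a regular right-angled pentagon together with the explicit side length $L=\cosh^{-1}((1+\sqrt{5})/2)$, exploiting that $\tilde M_1, \tilde M_2$ are pentagons of this fixed size anchored at $\tilde v_1, \tilde v_2$. The cases $k=1$ (where $\tilde Q_1=\tilde Q_2$ is a single convex pentagon) and $k\ge 2$ (where the outside ``barrier'' is a longer strip) may require slightly different arguments, though in both cases the strategy is the same: convert the local quadrant information at each bad corner into a global constraint on $\tilde\alpha$ that cannot be satisfied simultaneously at both $\tilde v_1$ and $\tilde v_2$.
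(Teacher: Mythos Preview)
Your setup matches the paper's exactly: lift the configuration to $\mathbb{D}$, observe that the three $\widetilde S_0$-pentagons at each bad vertex force $\widetilde\alpha$ into specific half-planes, and derive a contradiction. The paper's proof is extremely terse---it simply singles out one $\widetilde S_0$-pentagon $\widetilde P_1$ at $B_1$ and one $\widetilde P_4$ at $B_2$, notes that the white pentagons lie ``between them,'' and declares ``there is no such geodesic in $\mathbb{D}$,'' leaving the verification to the figure.

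Where your plan drifts is in the anticipated obstacle and its proposed cure. The ``wrapping around'' worry, the case split on $k$, and the appeal to the explicit side length $\cosh^{-1}((1+\sqrt5)/2)$ and pentagon diameter are all unnecessary: the contradiction is purely incidence-theoretic, with no metric input. The three tessellation geodesics $L$ (carrying $\widetilde\sigma$), $\widetilde e_1$, and $\widetilde e_2$ partition $\mathbb{D}$ into six regions. The three $\widetilde S_0$-pentagons at $\widetilde v_1$ occupy three of the four quadrants at $\widetilde v_1$, and likewise at $\widetilde v_2$; together they land in \emph{five} of the six regions (every region except the one containing the outside strip). But a geodesic crosses each of $L,\widetilde e_1,\widetilde e_2$ at most once, so it can visit at most four of the six regions. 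That is the whole argument. Phrased closer to the paper: the $\widetilde S_0$-pentagon at $\widetilde v_1$ across $\widetilde e_1$ from $\widetilde Q_1$ and the one at $\widetilde v_2$ across $\widetilde e_2$ from $\widetilde Q_2$ both lie on the $Q$-side of $L$; the arc of $\widetilde\alpha$ between them therefore stays on that side of $L$ and must cross $\widetilde e_1$ there---but $\widetilde\alpha$ already crosses $\widetilde e_1$ on the \emph{other} side of $L$ (between your $\widetilde M_1$ and its neighbor), giving two crossings of $\widetilde e_1$.

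So your plan is on the right track and uses all the right pieces; you are just not yet exploiting the third $\widetilde S_0$-pentagon at each vertex (the one sharing the $\widetilde e_i$-edge with $\widetilde Q_i$, not only the diagonal $\widetilde M_i$) to full effect. Once you do, the quantitative machinery can be dropped entirely.
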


\begin{proof}
Let $\widetilde{\alpha}$ be a lift of $\overline \alpha$ to $\D$. Lift every pentagon of $S_0$ to its lift that intersects $\mathring{\widetilde{\alpha}}$. Doing so, we have lifted all of $S_0$ and $\partial S_0$. We call their lifts $\widetilde{S_0}$ and $\widetilde{\partial S_0}$. Now suppose we have a bad corner in $S_0$ formed by three pentagons $P_1$, $P_2$, $P_3 \in S_0$. We have lifted these three pentagons to $\widetilde P_1$, $\widetilde P_2$ and $\widetilde P_3$ in $\D$. By construction, $\widetilde P_1$, $\widetilde P_2$ and $\widetilde P_3$ intersect $\widetilde{\alpha}$ and form a bad corner, $B_1$, on $\widetilde{\partial S_0}$. Translating by an element of $\Gamma$, we may assume that $\widetilde P_1$, $\widetilde P_2$ and $\widetilde P_3$ form the region in Figure~1(A) below.

\begin{figure}[h]
\centering
\subfloat[]{\label{}\includegraphics[trim = -.5in 2.48in -.5in 2in, clip=true, totalheight=0.2\textheight]{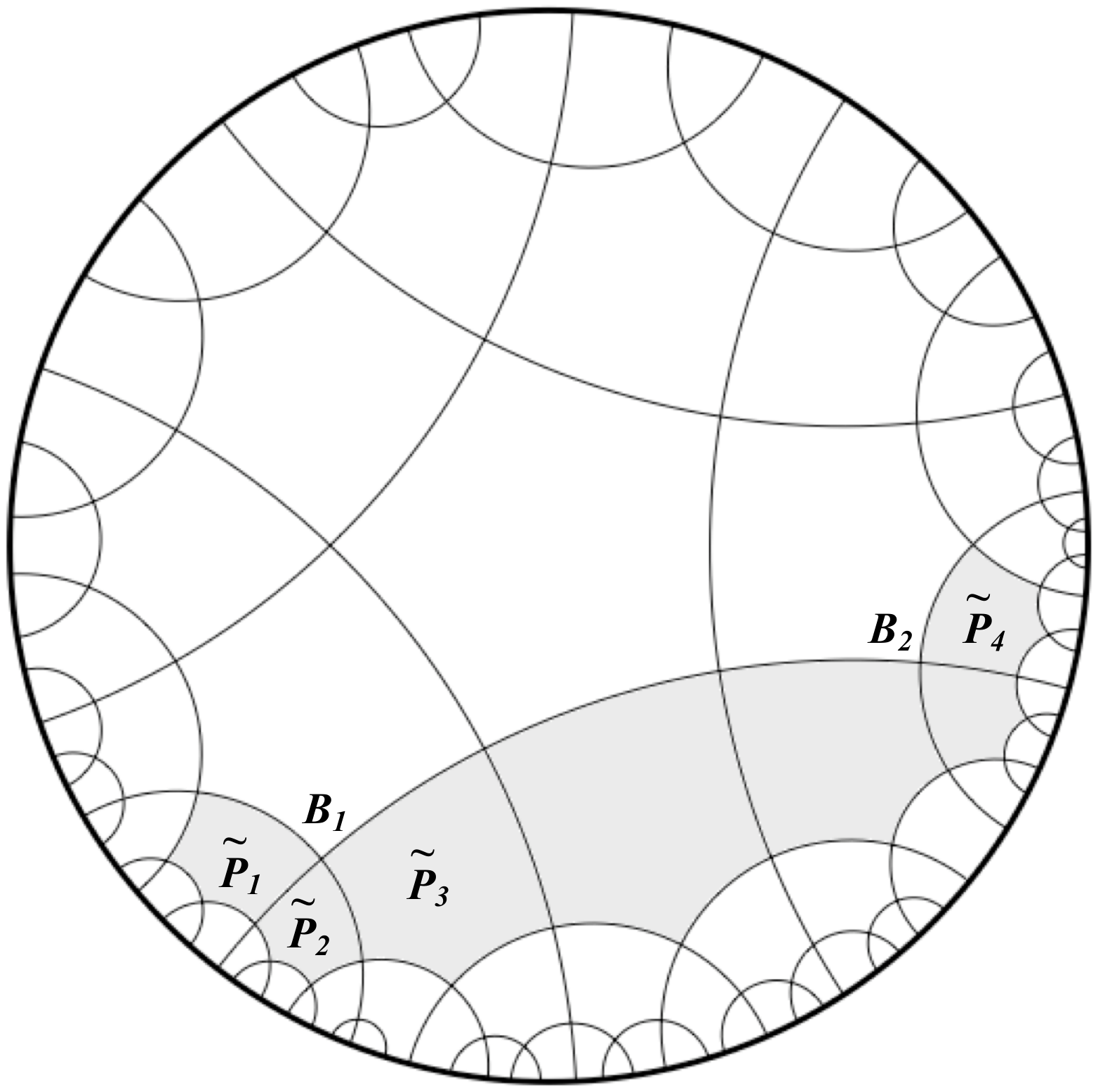}}
\subfloat[]{\label{} \includegraphics[trim = -.5in 2.5in -.5in 2in, clip=true, totalheight=0.2\textheight]{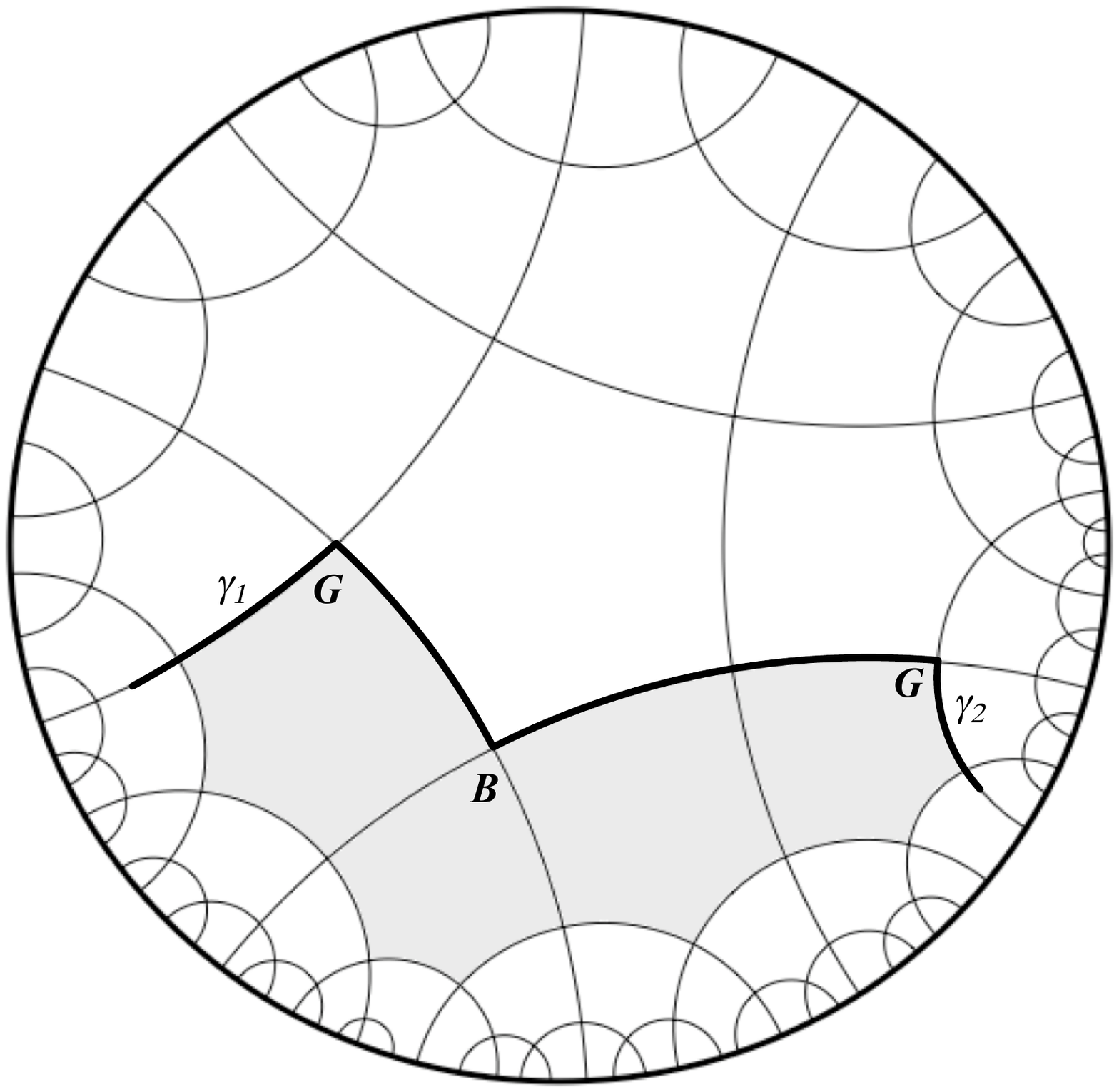}}
\caption{}
\end{figure}

If we hit a second consecutive bad corner, $B_2$, travelling along $\widetilde{\partial S_0}$, then there exists a pentagon $\widetilde P_4 \in \widetilde{S_0}$, as in Figure~1(A) above, that is one of the pentagons that forms $B_2$. Since $\widetilde P_1$, $\widetilde P_4 \in \widetilde{S_0}$,  $\widetilde{\alpha}$ must intersect both of those pentagons, but $\widetilde{\alpha}$ cannot intersect any of the white pentagons between them. There is no such geodesic in $\D$. Thus, there can never be two consecutive bad corners along a boundary component of $S_0$. 

\end{proof}

The result above is independent of the choice of orientation for the boundary component containing $B_1$, so that two consecutive bad corners can never occur along a boundary component of $S_0$ with respect to either of the two choices of orientation for the boundary component.

To each of the boundary components of $S_0$ we can associate a word, $w$, in the letters $G$ and $B$ by reading off whether the consecutive corners are good or bad along the boundary component. For example, if a boundary component contains two bad corners and four good corners the word $w$ could be $w= GBGBGG$. For long words we will write $w = \cdots GBGGBGG \cdots$, by which we mean that we are reading off only a portion of the corners along the boundary component. An immediate consequence of Lemma 3.2 is the following. 

\begin{corollary}
Let $B_0$ be a bad corner of a boundary component of $S_0$. Then the word $w$ associated to that boundary component of $S_0$ must be of the form $w = B_0, w= B_0G, w = GB_0$ or $w = \cdots GB_0G \cdots$.  
\end{corollary}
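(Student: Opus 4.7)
The plan is to derive the corollary directly from Lemma 3.2 by a short case analysis on the length of the cyclic word $w$. The key observation is that, although Lemma 3.2 was stated for a single fixed configuration of a bad corner $B_1$ followed by another bad corner $B_2$, its conclusion is symmetric: it simply asserts that no two bad corners are adjacent on any boundary component of $S_0$, regardless of which one we call ``first''. Moreover, the remark after Lemma 3.2 makes this independent of the orientation chosen on the boundary component. So the statement we can actually use is: in the cyclic word $w$, the letter $B$ is never immediately followed or preceded by another $B$.

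First, I would fix a boundary component of $S_0$ containing the bad corner $B_0$ and consider the cyclic word $w$ obtained by reading the corner types along it in the chosen orientation. Then I would split into cases depending on the number $n$ of corners on that boundary component.

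If $n=1$, the word consists of the single letter $B_0$, giving $w=B_0$. If $n=2$, Lemma 3.2 forbids the other corner from being bad, so it must be good, giving $w=B_0G$ (or equivalently $w=GB_0$ depending on which direction we read from $B_0$). If $n\geq 3$, both the corner immediately preceding $B_0$ and the corner immediately following $B_0$ exist as distinct letters in $w$, and Lemma 3.2 applied to each adjacency forces both to be $G$; hence $w=\cdots GB_0G\cdots$. These three cases exhaust all possibilities, matching the four listed forms in the statement.

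I do not expect any real obstacle here: the corollary is purely formal, a bookkeeping consequence of Lemma 3.2. The only mild subtlety is that $w$ should be read cyclically (so that $B_0G$ and $GB_0$ are genuinely the same word in the $n=2$ case, and the two flanking letters in the $n\geq 3$ case are well-defined), and that the conclusion of Lemma 3.2 applies symmetrically on both sides of $B_0$ thanks to the orientation-independence remarked after its proof.
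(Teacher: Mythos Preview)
Your proposal is correct and follows essentially the same argument as the paper: a case split on the number of corners ($n=1$, $n=2$, $n\geq 3$), invoking Lemma 3.2 and the orientation-independence remark to rule out a bad neighbor on either side of $B_0$. The only cosmetic difference is that the paper phrases the $n\geq 3$ case by explicitly excluding the patterns $\cdots GB_0B_1\cdots$ and $\cdots B_1B_0G\cdots$, while you state directly that both flanking corners must be $G$.
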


\begin{proof}
If $B_0$ is the only corner of that boundary component, then $w=B_0$. If the boundary component consists of exactly two corners, Lemma 3.2 tells us that $w \neq B_0B_1$ where $B_1$ is another bad corner since bad corners never occur consecutively along $\partial S_0$.  Thus, $w=B_0G$ or $w=GB_0$. If the boundary component has three or more corners, then again by Lemma 3.2 we know that two bad corners never occur consecutively along $\partial S_0$ regardless of the orientation we choose for the boundary component. Thus, $w \neq \cdots GB_0B_1 \cdots$ and $w \neq \cdots B_1B_0G \cdots$, and $w$ must therefore be of the form $\cdots GB_0G \cdots$. 
\\
\end{proof}

Now we will attempt to convexify $S_0$ by adding pentagons near the bad corners and show that the number of pentagons needed can be bounded. 

\begin{proof}[Proof of Theorem 3.1]
So long as we are not in the case where $w = B$, $w= BG$ or $w=GB$ (see the note below), the picture at each bad corner looks like Figure~2 below. If we extend the far geodesic edges of the good corners and add the pentagons that intersect $S_0$ lying between these two extended geodesic segments, we will have convexified this portion of $S_0$.

\begin{figure}[h]
\centering
 \includegraphics[trim = 1.6in 4.45in 1.6in 4in, clip=true, totalheight=0.11\textheight]{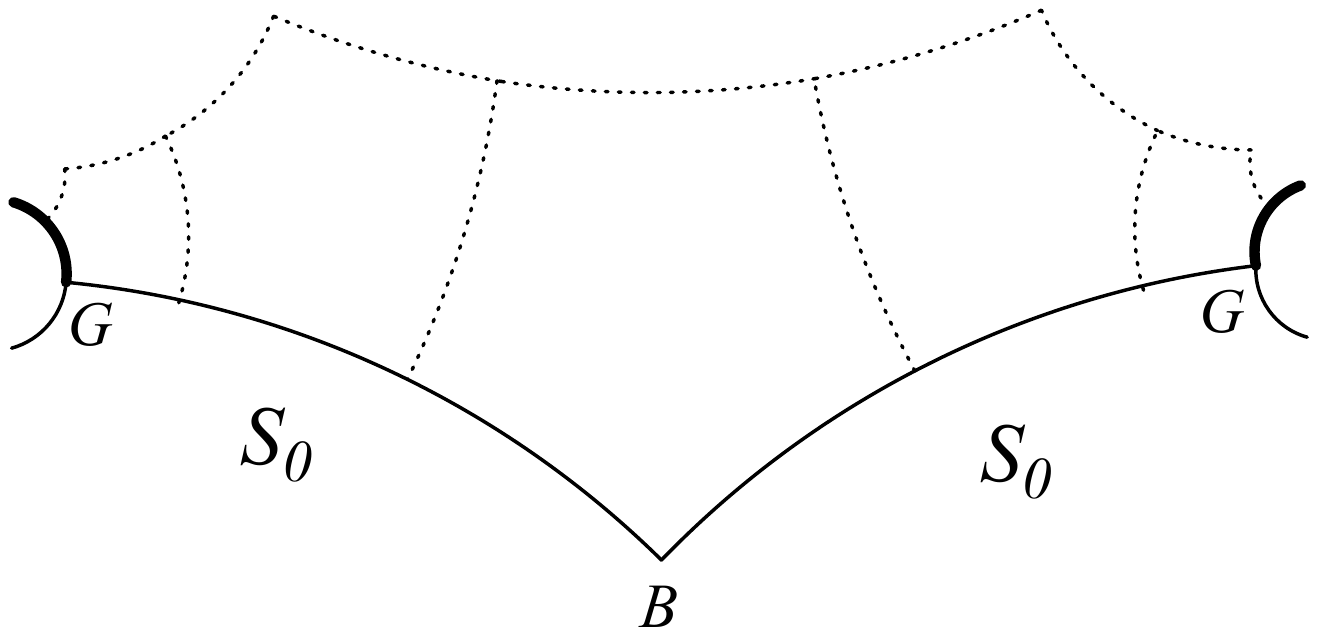}
 \caption{}
\end{figure}

{\bf Note:} If $w = B$, the lift of this component of $\partial S_0$ to $\D$ will look exactly like the figure above. In this case, the geodesics with the bold extensions will have been identified in $S_0$ and we will have convexified this boundary component. In fact, this one step is enough to obtain the convexification for the case where $w = GB$ or $BG$ as well. So in these cases the convexification procedure is complete here.
\\
\\
\indent We follow this procedure for every bad corner of $S_0$.  As shown earlier, the bad corners of $S_0$ are separated by one or more good corners. We will see below that it is slightly easier to convexify portions of boundary components where bad corners are separated by two or more good corners so we will handle this case first. 
\\
\\
\emph{Case 1:} Suppose $B_1$ is a bad corner of $S_0$ and that travelling along $\partial S_0$, the next bad corner we hit, $B_2$, is separated from $B_1$ by two or more good corners. Then Figure~3(A) below shows that we have convexified the region around $B_1$ and $B_2$ and we need not add any more pentagons between the two bad corners.

\begin{figure}[h]
\centering
 \subfloat[]{\label{} \includegraphics[trim = .2in 3.25in .2in 3.15in, clip=true, totalheight=0.14 \textheight]{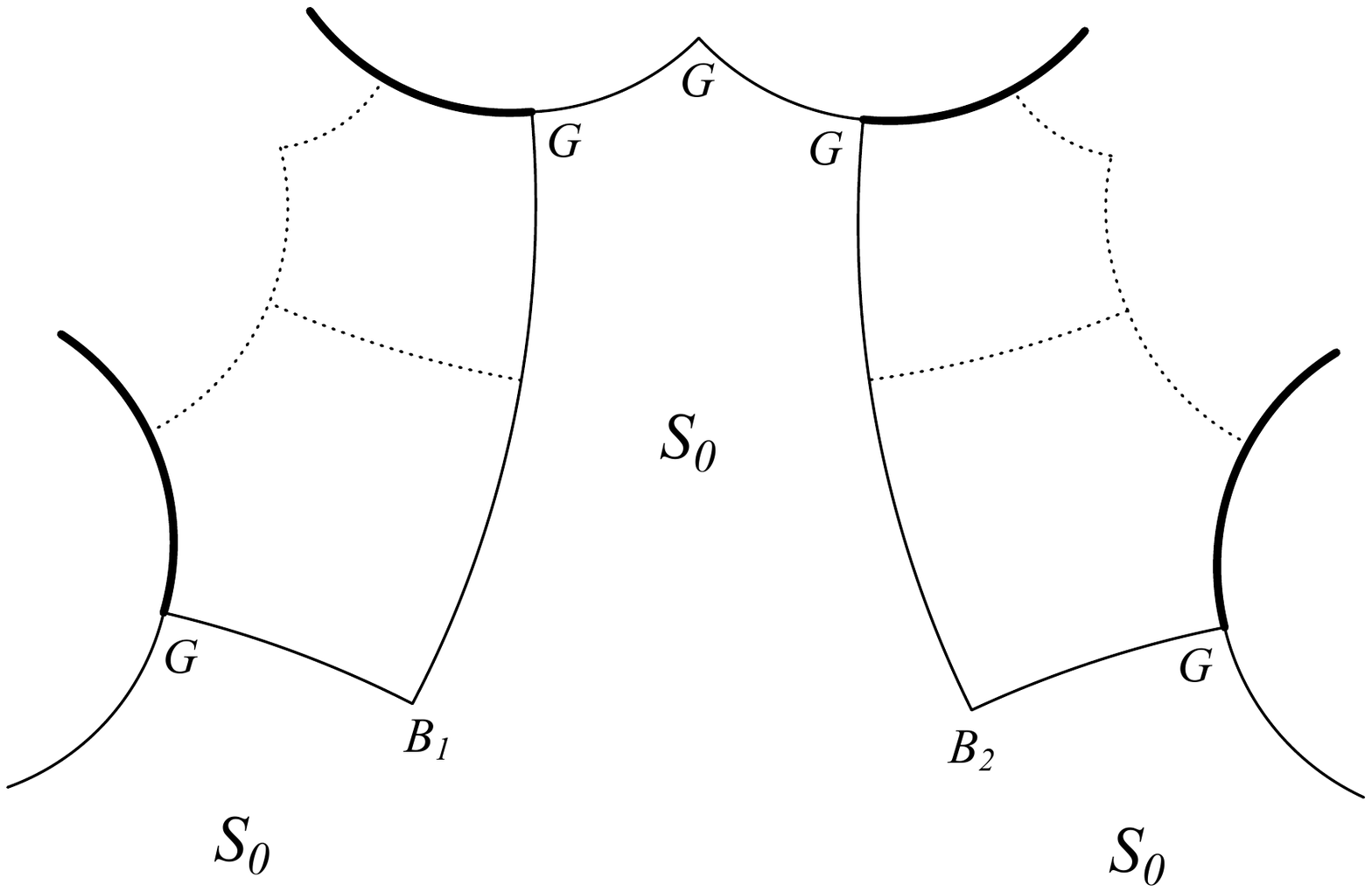}}
 \subfloat[]{\label{}  \includegraphics[trim = .2in 3.3in .2in 3.25in, clip=true, totalheight=0.14\textheight]{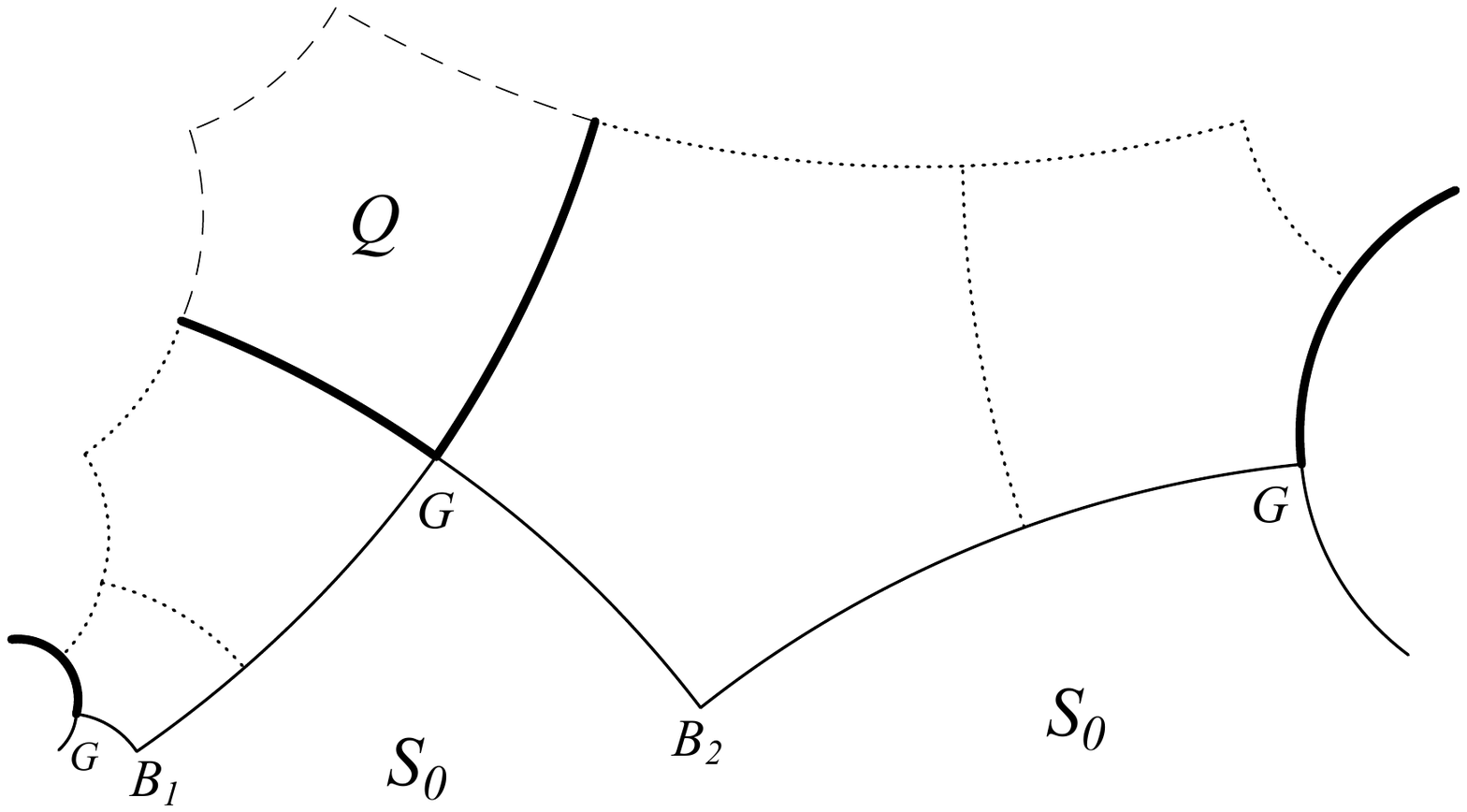}}
 \caption{}
\end{figure}

\emph{Case 2:} Suppose that $B_2$ is separated from $B_1$ by only one good corner, that is $w = \cdots GB_1GB_2G \cdots$. Then we will need to add one more pentagon of $S$ to $S_0$, which will still have non-empty intersection with $S_0$. 

As seen in Figure~3(B) above, we have created a new bad corner during our attempt to convexify $S_0$. However, we can simply add in the one missing pentagon, $Q$, which still intersects $\partial S_0$ at one point. 

After adding all such pentagons $Q$, we have a set $Y$ containing $S_0$ and obtained by adding pentagons that all intersect $S_0$. $Y$ is locally convex, and by the comments above $Y$ is therefore the convexification of $S_0$ we were looking for.
\\
\end{proof}

\section{Bounding the number of pentagons in $Y$}
Our next goal will be to obtain an upper bound on the number of pentagons in $Y$, the convex set obtained in the proof of Theorem 3.1. This bound will play a crucial role in proving our main result, Theorem 7.1.

\begin{lemma}
The diameter of each pentagon in our tiling of $\D$, and therefore in our tiling of $X$, is $\cosh^{-1} \left( \left(1+ 2 \cos \dfrac{2\pi}{5}\right)^2\right)$.
\end{lemma}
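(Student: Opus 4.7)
The plan is to recognize the diameter of the pentagon as the length of any one of its diagonals, then compute that length via two applications of the hyperbolic law of cosines.

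First I would observe that the pentagon $P$ is a compact convex subset of $\D$. Since $\D$ is CAT$(-1)$, the function $q \mapsto d(p,q)$ is convex for each fixed $p$, and hence attains its maximum on the compact convex set $P$ at an extreme point, i.e., at a vertex; applying this reasoning in each variable shows the diameter of $P$ is realized by a pair of vertices. By the dihedral symmetry of the regular pentagon, vertex-to-vertex distances take only two values: the side length $s$ and the diagonal length $d$. Since each interior angle of $P$ is $\pi/2$, any three consecutive vertices form a right triangle with legs of length $s$ and hypotenuse $d$, so $d > s$ and the diameter equals $d$.

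To compute $s$, I would apply the angular hyperbolic law of cosines to the isosceles triangle cut out by the center of $P$ and two adjacent vertices. This triangle has apex angle $2\pi/5$ at the center and two base angles of $\pi/4$ (half the interior right angle of $P$), with $s$ opposite the apex; the formula yields
\[
\cos(2\pi/5) = -\tfrac12 + \tfrac12 \cosh(s), \qquad \text{so} \quad \cosh(s) = 1 + 2\cos(2\pi/5).
\]

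Finally, the side-based hyperbolic law of cosines applied to the right triangle formed by three consecutive vertices of $P$ (two legs of length $s$ meeting at the right angle $\pi/2$) gives
\[
\cosh(d) = \cosh^2(s) - \sinh^2(s)\cos(\pi/2) = \cosh^2(s) = \bigl(1+2\cos(2\pi/5)\bigr)^2,
\]
which rearranges to the stated formula after taking $\cosh^{-1}$. No step is especially delicate; the only point requiring care is correctly identifying which angles of each auxiliary triangle are $\pi/4$ versus $\pi/2$, after which the computation is mechanical.
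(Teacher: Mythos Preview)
Your proof is correct and follows essentially the same approach as the paper: both decompose the pentagon into five isosceles triangles with angles $\tfrac{2\pi}{5},\tfrac{\pi}{4},\tfrac{\pi}{4}$ to compute the side length via the second hyperbolic law of cosines, and then apply the first law of cosines to the right triangle formed by three consecutive vertices to obtain the diagonal. The only difference is that you supply a convexity argument for why the diameter is realized at a pair of vertices, which the paper simply asserts.
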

\begin{proof}
Each pentagon in our tiling has angles $\frac{\pi}{2}$. For simplicity we will work with a regular, right-angled pentagon $P$ centered at the origin of $\D$. 

The longest geodesic segments between any two points of $P$ are represented by the dotted lines in Figure~4(A). Call these five segments $\gamma_1, \ldots, \gamma_5$ of lengths $\ell_1, \ldots, \ell_5$ respectively. It turns out that $\ell_i$ are all equal. 

\begin{figure}[h]
\centering
 \subfloat[]{\label{}  \includegraphics[trim = .5in 3.8in .5in 3.85in, clip=true, totalheight=0.13\textheight]{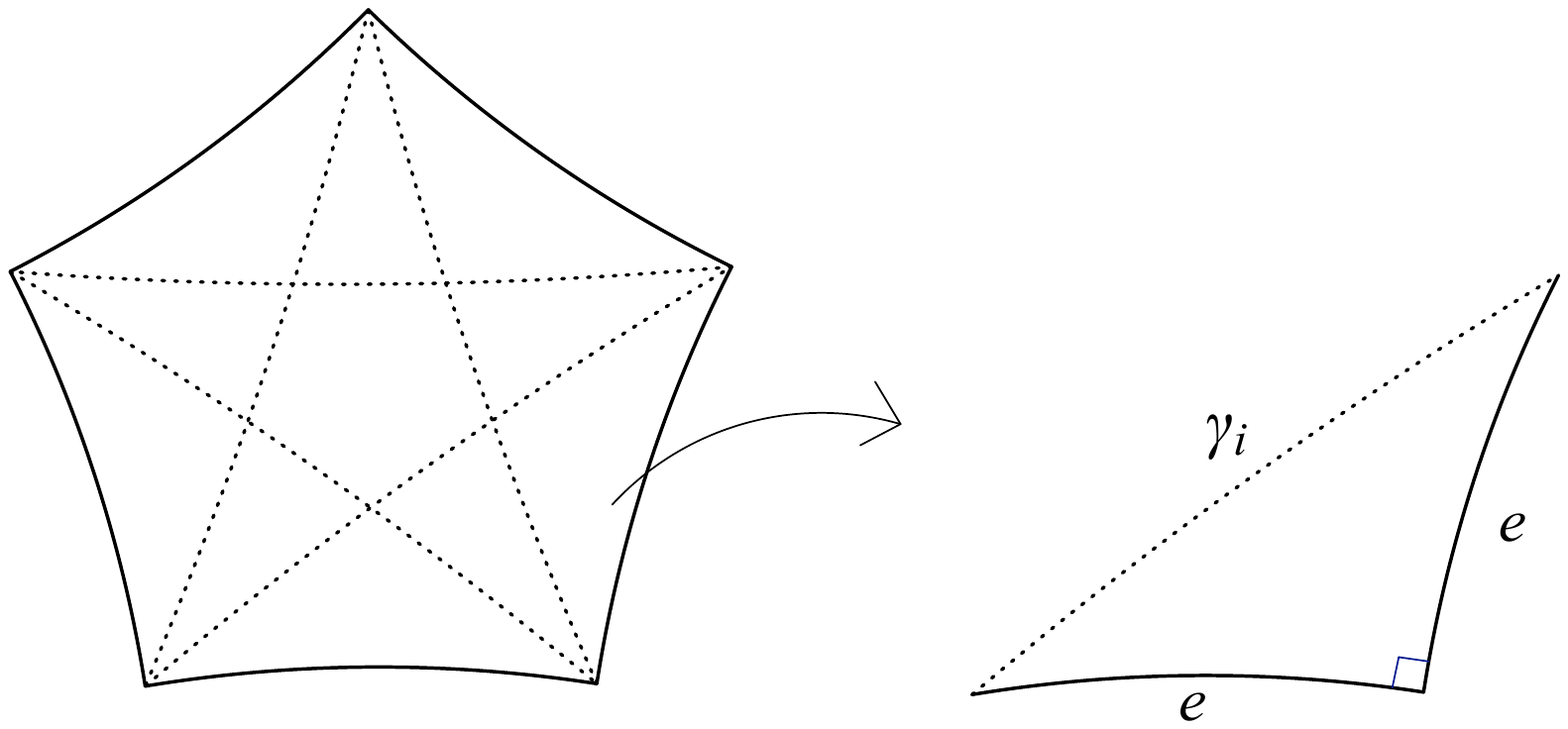}}
  \subfloat[]{\label{}  \includegraphics[trim = .5in 4.2in .5in 3.85in, clip=true, totalheight=0.13\textheight]{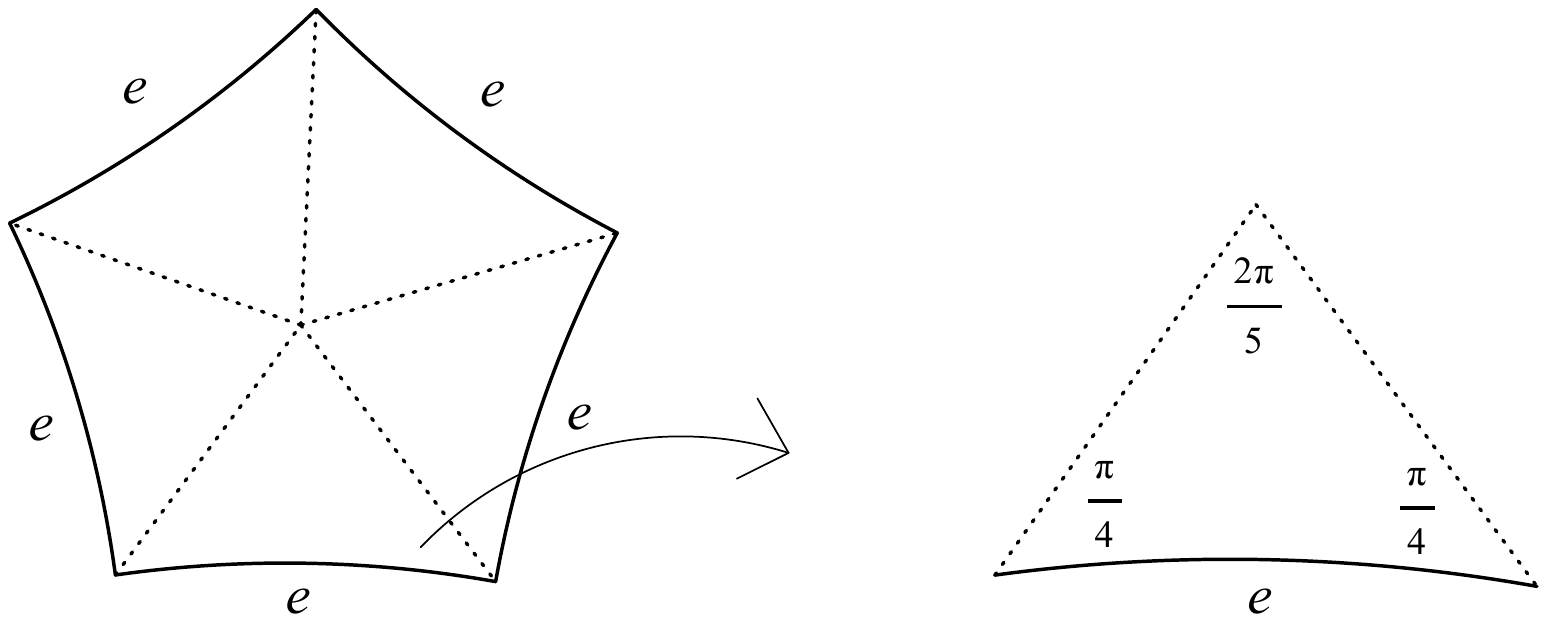}}
  \caption{}
\end{figure}

First we will find the lengths of the sides of $P$, which are also all equal. Break $P$ into the five triangles in Figure~4(B) above. Each triangle has angles $\frac{2\pi}{5}$, $\frac{\pi}{4}$ and $\frac{\pi}{4}$ with the side of $P$, whose length we will call $e_P$, opposite the $\frac{2\pi}{5}$ angle. Using a hyperbolic Law of Cosines \cite{Beardon}, we have that $e_P = \cosh^{-1} \left( 1+ 2 \cos \left( \dfrac{2\pi}{5} \right)\right)$.

Now going back to Figure~4(A), we see that each of the $\gamma_i$  forms a side of a hyperbolic triangle opposite a right angle where the other two sides are of length $e_P$. Using another hyperbolic Law of Cosines \cite{Beardon} we have that $\ell_i = \cosh^{-1} \left( \left( \cosh e_P \right)^2\right) =  \cosh^{-1} \left( \left(1+ 2 \cos \dfrac{2\pi}{5}\right)^2\right) \approx 1.167.$

\end{proof}

The length $\ell_i$ is the diameter of $P$ and we now call this diameter $d_0$. We will use $d_0$ to bound the number of pentagons in $Y$, but first we need the following lemma.

\begin{lemma}
Let $\Omega \subset \HS$ be the region in Figure~5. Then $Area(\Omega) = \ell_0 \sinh b$, where $\ell_0$ is the length of the geodesic segment between $r_0 i$ and $R_0 i$. 
\end{lemma}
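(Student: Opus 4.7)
The plan is to compute the hyperbolic area of $\Omega$ directly by integrating the area element $dA = dx\,dy/y^2$ of the upper half-plane over $\Omega$. Based on the stated formula, $\Omega$ is a hyperbolic ``Saccheri quadrilateral'': it has the vertical geodesic segment from $r_0 i$ to $R_0 i$ as one side; two geodesic sides perpendicular to this segment at its endpoints (which are arcs of the Euclidean semicircles of radii $r_0$ and $R_0$ centered at $0$); and the fourth side is the equidistant curve (hypercycle) at hyperbolic distance $b$ from the imaginary axis.

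First I would record two standard half-plane computations. The hyperbolic length of the segment from $r_0 i$ to $R_0 i$ is $\ell_0 = \ln(R_0/r_0)$. Next, the equidistant curve at distance $b$ from the imaginary axis is a Euclidean ray through the origin: parametrizing a point $r \sin\theta + i\, r\cos\theta$ (where $\theta$ is the Euclidean angle measured from the positive imaginary axis), the foot of the perpendicular to the imaginary axis is $i r$, and one computes
\[
\cosh d \;=\; \frac{\sqrt{x^2+y^2}}{y} \;=\; \sec\theta,
\]
so the locus $d = b$ is exactly the ray $\theta = \theta_b$ with $\cos\theta_b = \operatorname{sech} b$, equivalently $\tan\theta_b = \sinh b$.

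Second, I would switch to the coordinates $(r,\theta)$ defined by $x = r\sin\theta,\; y = r\cos\theta$, in which $\Omega$ is the coordinate rectangle $r \in [r_0,R_0],\; \theta \in [0,\theta_b]$. The area element becomes
\[
\frac{dx\,dy}{y^2} \;=\; \frac{r\,dr\,d\theta}{r^2\cos^2\theta} \;=\; \frac{dr}{r}\cdot\frac{d\theta}{\cos^2\theta},
\]
which separates. Thus
\[
\operatorname{Area}(\Omega) \;=\; \int_{r_0}^{R_0}\frac{dr}{r} \int_0^{\theta_b}\sec^2\theta\,d\theta \;=\; \ln(R_0/r_0)\cdot\tan\theta_b \;=\; \ell_0\sinh b,
\]
using $\tan\theta_b = \sinh b$ from the first step.

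The substantive content is the identification of the hypercycle side as a Euclidean ray through the origin and the computation of the corresponding angle $\theta_b$; once this is in hand, the separation of variables in $(r,\theta)$ makes the integration a one-line calculation. There is no real obstacle beyond bookkeeping, but the cleanest presentation is to do the equidistant-curve computation first so that the change of variables is motivated.
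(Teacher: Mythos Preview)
Your proof is correct and follows essentially the same approach as the paper: both pass to polar coordinates so that the area integral separates into $\int_{r_0}^{R_0} dr/r$ times an angular integral, yielding $\ell_0 \tan\theta_0$, and then identify $\tan\theta_0 = \sinh b$. The only cosmetic differences are that the paper measures the polar angle from the positive $x$-axis (integrating $\csc^2$ from $\pi/2-\theta_0$ to $\pi/2$ and obtaining $\cot(\pi/2-\theta_0)$) and invokes the angle of parallelism laws for the final identity, whereas you measure from the imaginary axis and derive $\tan\theta_b=\sinh b$ by a direct distance computation.
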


\begin{figure}[h]
\centering
 \includegraphics[trim = 1.25in 3in 1.25in 3in, clip=true, totalheight=0.135\textheight]{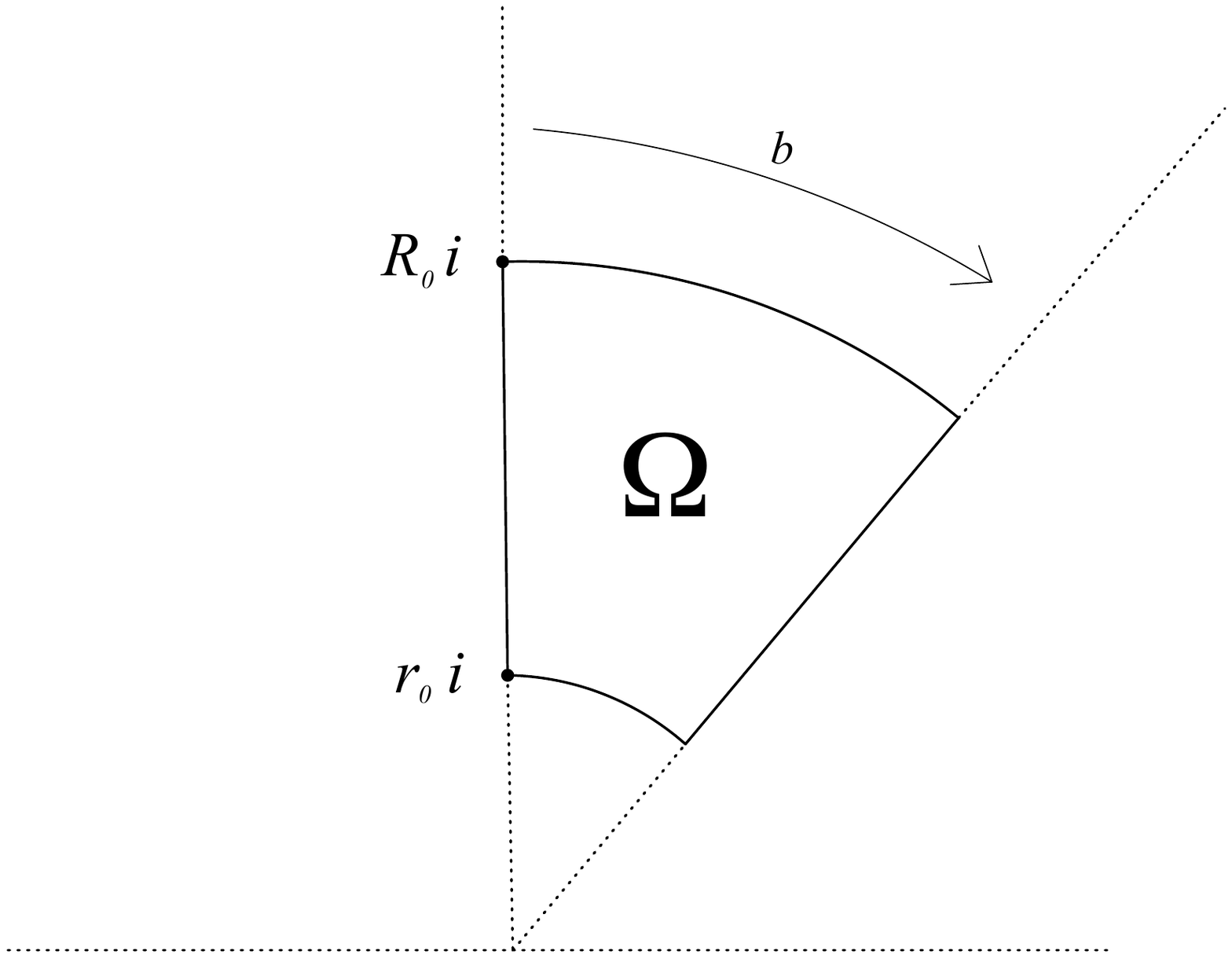}
 \caption{}
\end{figure}

\begin{proof}
$Area(\Omega) = \displaystyle\iint \limits_{\Omega} \frac{\mathrm{d}x \wedge \mathrm{d}y}{y^2} \; \;= \; \; \displaystyle\int_{\frac{\pi}{2} - \theta_0}^{\frac{\pi}{2}} \displaystyle\int_{r_0}^{R_0} \frac{\mathrm{d}r \wedge \mathrm{d}\theta}{r\sin^2 \theta} \; \; = \; \; \ln\left(\frac{R_0}{r_0}\right) \displaystyle\int_{\frac{\pi}{2} - \theta_0}^{\frac{\pi}{2}}\; \frac{1}{\sin^2 \theta}\; \mathrm{d}\theta \; \; = \; \; \ell_0 \left( -\cot \theta \bigg|_{\frac{\pi}{2} - \theta_0}^{\frac{\pi}{2}} \right)\; \; = \; \; \ell_0 \cot \left(\frac{\pi}{2} - \theta_0 \right)$.
 
 \vspace{.2 in}
\noindent By the angle of parallelism laws \cite{Katok}, $\cot \left( \dfrac{\pi}{2} - \theta_0 \right) = \sinh b$, so that $Area(\Omega) = \ell_0 \sinh b$.
\\
\end{proof}

\begin{theorem}
Let $\ell$ be the length of $\overline \alpha$ in $X$, and hence, the length of $\alpha$ in $\Sigma$. Then $Area(Y) \leq 2 \,\ell \sinh (2d_0)$.
\end{theorem}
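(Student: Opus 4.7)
The plan is to show that every pentagon in $Y$ lies within hyperbolic distance $2d_0$ of the geodesic $\overline{\alpha}$, and then bound the area of this tubular neighborhood using Lemma 4.2.

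For the containment step, I would argue as follows. By Theorem 3.1, every pentagon $P$ in $Y$ is either a pentagon of $S_0$ or was added during convexification, in which case $P \cap S_0 \neq \varnothing$, meaning $P$ meets some pentagon $P' \in S_0$. In the first case, by definition of $S_0$, $P$ meets $\overline{\alpha}$, so every point of $P$ lies within the diameter $d_0$ (given by Lemma 4.1) of $\overline{\alpha}$. In the second case, every point of $P$ is within $d_0$ of a point in $P \cap P'$, and that point is within $d_0$ of $\overline{\alpha}$ since $P' \in S_0$; by the triangle inequality every point of $P$ is within $2d_0$ of $\overline{\alpha}$. Thus $Y$ is contained in the closed $2d_0$-neighborhood $N_{2d_0}(\overline{\alpha})$ of $\overline{\alpha}$ in $X$.

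For the area computation, I would pass to the universal cover. Lift $\overline{\alpha}$ to its axis $\widetilde{\alpha} \subset \D$; a fundamental domain for the action of $\langle \alpha \rangle$ on $\widetilde{\alpha}$ is a geodesic segment of length $\ell$. The preimage $q^{-1}(N_{2d_0}(\overline{\alpha}))$ is the $2d_0$-neighborhood of $\widetilde{\alpha}$ in $\D$, which, working in the half-plane model with $\widetilde{\alpha}$ taken to be (a portion of) the positive imaginary axis, is a union of two angular sectors, one on each side of $\widetilde{\alpha}$. Each sector, restricted to a fundamental domain of length $\ell$, is exactly of the form $\Omega$ in Lemma 4.2 with $\ell_0 = \ell$ and $b = 2d_0$. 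Applying Lemma 4.2 to each side gives
\begin{equation*}
\mathrm{Area}\bigl(N_{2d_0}(\overline{\alpha})\bigr) \;=\; 2\,\ell\,\sinh(2d_0).
\end{equation*}
Since $Y \subset N_{2d_0}(\overline{\alpha})$, the desired bound $\mathrm{Area}(Y) \leq 2\,\ell\,\sinh(2d_0)$ follows.

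The only subtle point is verifying that the tubular neighborhood of $\overline{\alpha}$ is embedded in $X$ of the form computed in $\D$ modulo $\langle \alpha \rangle$ without self-overlap, and that $Y$ genuinely descends to this region; both follow because $X$ is an annulus or Möbius band with $\overline{\alpha}$ as core geodesic, so the $r$-neighborhood of $\overline{\alpha}$ in $X$ lifts bijectively in the transverse direction to the $r$-neighborhood of $\widetilde{\alpha}$ in $\D$ for any $r > 0$. I expect this lifting justification to be the only nonroutine part of the argument; the rest is a direct combination of Lemma 4.1, Theorem 3.1, and Lemma 4.2.
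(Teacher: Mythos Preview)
Your proposal is correct and follows essentially the same approach as the paper: both show $Y$ lies in the $2d_0$-neighborhood $Z$ of $\overline{\alpha}$ via Theorem~3.1 and the diameter bound of Lemma~4.1, then compute $\mathrm{Area}(Z)=2\ell\sinh(2d_0)$ by lifting to a fundamental domain for $\langle\alpha\rangle$ and applying Lemma~4.2 to each side. The paper phrases the lifting step as ``opening $Z$ along the $\beta_i$'' and briefly treats the M\"obius band case by noting that one can still cut along a single perpendicular arc to obtain the same region in $\mathbb{H}$; your remark about the transverse lifting covers the same point.
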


\begin{proof}
Let $ Z = \left\{ x \in X : d(x, \overline \alpha) \leq 2d_0 \right\}$. We know that every pentagon of $Y$ either intersects $\overline \alpha$ (and is an element of $S_0$) or intersects $S_0$. Thus, $\!\displaystyle \sup_{y \in Y} \left\{ d( y, \overline \alpha) \right\} \leq 2 \, d_0$, so that $Y \subseteq Z$.

Recall that $a$ is the basepoint of $\overline \alpha$. Choose points $z_1$ and $z_2$ on the two different boundary components of $Z$, such that $d( z_i, a) = 2\, d_0$. Let $\beta_i$ be the geodesic segment between $a$ and $z_i$, for $i = 1,2$. Then a lift of $Z$ to $\HS$ looks like the region in Figure~6, where $\widetilde \beta_i$ and $\widetilde \beta_{i}'$ are two lifts of $\beta_i$, for $i = 1,2$, and $\widetilde \alpha$ is a lift of $\overline \alpha$. We say that $Z$ ``opens" along the $\beta_i$.

\begin{figure}[h]
\centering
 \includegraphics[trim = 1.9in 1.5in 2in 1.5in, clip=true, totalheight=0.15\textheight]{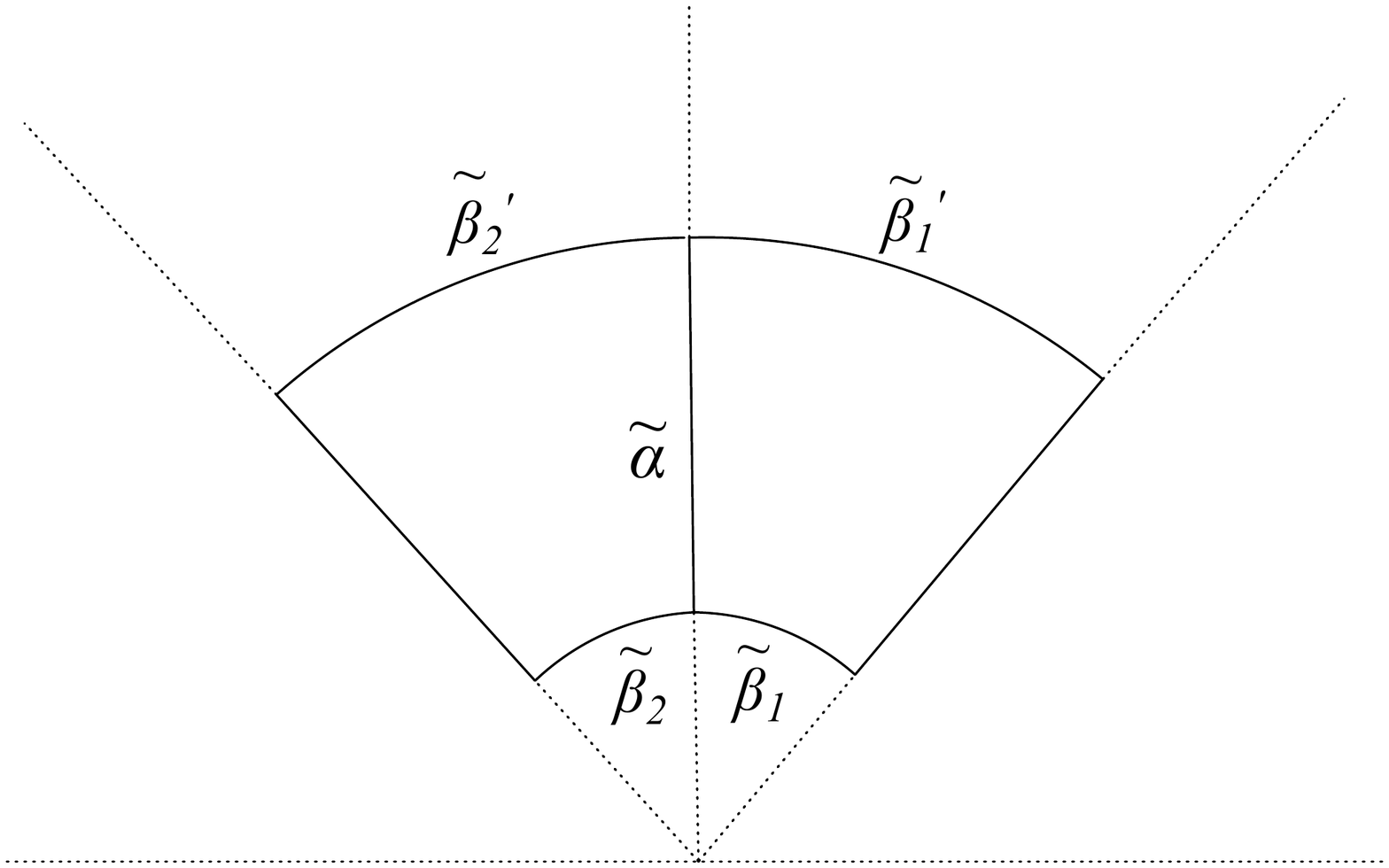}
 \caption{}
\end{figure}

Since the lengths of $\widetilde \beta_1$, $\widetilde \beta_1', \widetilde \beta_2$, $\widetilde \beta_2'$ are all equal to $2 \, d_0$, we can apply Lemma 4.2, which tells us that $Area(Z) = 2 \ell \sinh (2\, d_0)$. Thus, we have $Area(Y) \leq Area(Z) = 2 \ell \sinh (2\,d_0)$. We note that when $\Sigma$ is unorientable, $Z$ has one boundary component, but there exist many choices of $z_1$ and $z_2$ such that $d( z_i, a) = 2\, d_0$ and such that the geodesic $(z_1, z_2)$ between the points is orthogonal to $\partial Z$. Opening $Z$ along such a geodesic yields precisely the region in Figure~6 above. 
\\
\end{proof}

\begin{corollary} If $Y$ consists of $k$ pentagons, then $Area(Y) = k \, \frac{\pi}{2} \leq 2\,\ell \sinh (2 \, d_0)$. Solving for $k$ we have, $k \leq \dfrac{4 \, \sinh (2 \, d_0)}{\pi} \, \ell \approx 16.131\, \ell$. Thus, $k < 16.2 \, \ell$.
\end{corollary}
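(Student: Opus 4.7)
The plan is to combine Theorem 4.3 with an explicit area computation for a single pentagon, and then carry out a direct numerical estimate. Since $Y$ is assembled from $k$ tiles of the tessellation with pairwise disjoint interiors, its total area is exactly $k$ times the area of one pentagon, so the only geometric content is computing the area of a regular right-angled hyperbolic pentagon.

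For this, I would invoke the Gauss--Bonnet formula: a hyperbolic $n$-gon with interior angles $\alpha_1, \dots, \alpha_n$ has area $(n-2)\pi - \sum_i \alpha_i$. Applied to a pentagon with all five angles equal to $\pi/2$, this gives $3\pi - \tfrac{5\pi}{2} = \tfrac{\pi}{2}$. Hence $\mathrm{Area}(Y) = k \cdot \tfrac{\pi}{2}$, and combined with the bound $\mathrm{Area}(Y) \leq 2\ell \sinh(2d_0)$ from Theorem 4.3, rearranging yields $k \leq \tfrac{4\sinh(2d_0)}{\pi}\, \ell$.

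What remains is the essentially arithmetic verification that $\tfrac{4\sinh(2d_0)}{\pi} < 16.2$. Using Lemma 4.1, $\cosh d_0 = (1 + 2\cos(2\pi/5))^2$, and a pleasant simplification is available since $1 + 2\cos(2\pi/5)$ equals the golden ratio $\varphi = \tfrac{1+\sqrt{5}}{2}$. Hence $\cosh d_0 = \varphi^2 = \varphi + 1$ and $\sinh d_0 = \sqrt{\varphi^4 - 1} = \sqrt{3\varphi + 1}$, so the duplication formula gives $\sinh(2d_0) = 2(\varphi + 1)\sqrt{3\varphi + 1}$. Numerically this is about $12.67$, yielding $\tfrac{4\sinh(2d_0)}{\pi} \approx 16.131$, which is strictly less than $16.2$. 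The only obstacle here is bookkeeping with the constants; no additional geometric input is required beyond Theorem 4.3 and Gauss--Bonnet.
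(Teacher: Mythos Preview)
Your proposal is correct and follows exactly the route the paper intends: the paper states this corollary with nothing more than a \qed, treating the identity $\mathrm{Area}(Y)=k\cdot\tfrac{\pi}{2}$ together with Theorem 4.3 as immediately yielding the bound. Your write-up simply fills in the details the paper omits---the Gauss--Bonnet computation for a right-angled pentagon and the pleasant golden-ratio identity $1+2\cos(2\pi/5)=\varphi$ for the numerical estimate---so there is no substantive difference in approach.
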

\qed

\begin{corollary}
If $\overline \alpha$ is the image of a geodesic line in our tessellation of $\D$, then $k < 3.1\, \ell$. 
\end{corollary}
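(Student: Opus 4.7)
My plan is to exploit a special feature of this case: when $\overline{\alpha}$ runs along edges of the tessellation, $S_0$ is already convex, so $Y = S_0$, and the width-$2d_0$ tubular estimate from the proof of Theorem~4.3 can be replaced by a width-$d_0$ estimate.

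First, I would carry out a local analysis at each vertex $v$ of the tessellation lying on $\overline{\alpha}$. Because every interior pentagon angle is $\pi/2$, a geodesic entering $v$ along one edge must exit along the unique collinear (opposite) edge, so $\overline{\alpha}$ occupies two opposite edges of the tessellation at $v$. Exactly four pentagons meet at $v$, and inspecting their edges at $v$ shows that each of them contains one of these two $\overline{\alpha}$-edges; hence all four lie in $S_0$. On a given side of $\overline{\alpha}$ the two such pentagons contribute interior angle $\pi/2 + \pi/2 = \pi$ to $\partial S_0$ at $v$, so $v$ is not a corner. The remaining corners of $\partial S_0$ are the ``outer'' vertices of individual pentagons, where two non-$\overline{\alpha}$ edges of one pentagon meet at angle $\pi/2$; these are all good. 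Hence $S_0$ has no bad corners, it is locally convex, and by the theorem of Thurston cited in Section~3 it is convex. Consequently $Y = S_0$, and every pentagon of $Y$ contains an entire edge of $\overline{\alpha}$.

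Second, I would rerun the estimate of Theorem~4.3 with a tighter width. Since every pentagon of $Y$ meets $\overline{\alpha}$ in a full edge, $\sup_{y \in Y} d(y,\overline{\alpha}) \le d_0$. Setting $Z' = \{x \in X : d(x,\overline{\alpha}) \le d_0\}$ and mimicking the proof of Theorem~4.3, but applying Lemma~4.2 with $b = d_0$ in place of $b = 2d_0$, yields
\[
Area(Y) \le Area(Z') = 2\ell\sinh(d_0).
\]
Equating with $k\pi/2$ gives $k \le 4\ell\sinh(d_0)/\pi$. Using $\cosh(d_0) = \phi^2$ with $\phi = (1+\sqrt{5})/2$, we have $\sinh(d_0) = \sqrt{\phi^4 - 1} = \sqrt{3\phi+1} \approx 2.42$, so $k \le 4\sqrt{3\phi+1}\,\ell/\pi \approx 3.08\,\ell < 3.1\,\ell$.

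The main obstacle is the local-convexity step; the essential input is the rigidity of geodesics in the right-angled tessellation, namely that a geodesic edge must continue through each vertex along the unique collinear edge. Once that is in hand, the numerical improvement from $16.2\,\ell$ to $3.1\,\ell$ is precisely the ratio $\sinh(d_0)/\sinh(2d_0)$ inherited from shrinking the tubular width.
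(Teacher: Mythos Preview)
Your proof is correct and follows the same approach as the paper: observe that $Y=S_0$ when $\overline{\alpha}$ is a tessellation line, replace the $2d_0$-neighborhood by the $d_0$-neighborhood, and rerun the area estimate via Lemma~4.2 to get $k\le 4\sinh(d_0)\,\ell/\pi\approx 3.08\,\ell$. The only difference is that you actually supply a local justification for $Y=S_0$ (no bad corners), whereas the paper simply asserts it; your vertex analysis is slightly imprecise at the ``side'' vertices $w$ adjacent to $\overline{\alpha}$-vertices (two pentagons of $S_0$ meet there, giving interior angle $\pi$, so they are non-corners rather than good corners), but the conclusion that no bad corners occur is unaffected.
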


\begin{proof}
Since $Y = S_0$, $Y \subseteq \left\{ x \in X : d(x, \overline \alpha) \leq d_0 \right\}$. Therefore, $k \leq \dfrac{4 \, \sinh (d_0)}{\pi} \, \ell \approx 3.081\, \ell< 3.1\, \ell$. 

\end{proof}

We should note that the bounds given in Corollaries 4.4 and 4.5 are not sharp. By going to the $2d_0$ neighborhood of the geodesic $\overline \alpha$ we are overestimating the area of the convex union of pentagons $Y$, and therefore overestimating the number of pentagons in the set. Also, in the case where $\overline \alpha$ is the image of a geodesic line in our tessellation, we can give the exact number of pentagons in $Y= S_0$ to be $k= \frac{2\ell}{e_P}$ where $e_P$ is the length of the edges in the regular right-angled pentagons computed above. We use the bound in Corollary 4.5 because it is analogous to the bound in Corrollary 4.4 and using this estimate eliminates the need for a separate argument in the proof of Theorem 7.1.

\section{Lift to Finite Cover}
In this section we will prove Theorem 1.1 in the closed case and then show that the argument easily extends to the case of a compact surface with boundary. 

Let $\Sigma$ be a closed surface endowed with the standard metric defined in Section 2. Recall that $X$ is the cover of $\Sigma$ corresponding to $\langle \alpha \rangle$, and that $\D \stackrel{q}{\longrightarrow} X $ is the universal covering map. Therefore, the deck transformation group $\langle \alpha \rangle$ acts by isometries on $\D$. The axis of the isometries of $\langle \alpha \rangle$ is the geodesic line in $\D$ consisting of all of the lifts of $\overline \alpha$. Call this axis $L$.

Let $\widetilde Y$ be the set of all lifts of the pentagons in $Y$ to $\D$, and let the set of isometries of $\D$ consisting of reflections in the sides $y_i$ of $\widetilde Y$ be denoted by $R = \left\{R_{y_i}\!: y_i \;\text{is a side of}\; \widetilde Y \right\}$. Then $\widetilde Y$ is a fundamental domain for the action of $\langle R \rangle$ on $\D$ by the Poincar\'{e} Polygon Theorem \cite{Beardon}. Note that $\langle R \rangle < \Gamma$ since the sides of $\widetilde Y$ are lines in our tessellation of $\D$, and a reflection in any of these lines is an element of $\Gamma$. 

Let $K = \langle R, \alpha \rangle$, and let $\widetilde{\alpha}$ be one lift of $\overline \alpha$ to $\D$. Next, lift each pentagon of $Y$ to one of its lifts so that the result is a connected union of $k$ pentagons in $\D$ containing $\widetilde \alpha$. Call this union of $k$ pentagons $\overline Y$.

\begin{lemma}
$\overline Y$ is a fundamental domain for the action of $K = \langle R, \alpha \rangle$ on $\D$.
\end{lemma}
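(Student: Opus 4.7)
The plan is to bootstrap from two fundamental domain facts already available. The preceding paragraph establishes that $\widetilde{Y}$ is a fundamental domain for $\langle R \rangle$ acting on $\mathbb{D}$, and the construction of $\overline{Y}$ makes it essentially built in that $\overline{Y}$ is a fundamental domain for $\langle \alpha \rangle$ acting on the strip $\widetilde{Y}$. Combining these two decompositions should yield the lemma.

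First I would record that $\alpha \widetilde{Y} = \widetilde{Y}$: since $\widetilde{Y} = q^{-1}(Y)$ and $\alpha$ lies in the deck transformation group of $q$, this is immediate. Consequently $\alpha$ permutes the sides $y_i$ of $\widetilde{Y}$, and each conjugate $\alpha R_{y_i} \alpha^{-1}$ is the reflection in $\alpha y_i$, which again belongs to $R$. Hence $\langle R \rangle$ is normalized by $\alpha$, and every element of $K = \langle R, \alpha \rangle$ can be written in the form $r \alpha^n$ with $r \in \langle R \rangle$ and $n \in \mathbb{Z}$; i.e., $K = \langle R \rangle \langle \alpha \rangle$ as a set product.

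Second, I would use that $\overline{Y}$ is a connected union of one chosen lift of each pentagon of $Y$. Since $\langle \alpha \rangle$ acts freely on $\mathbb{D}$ as the deck group of $q$, the translates $\alpha^n \overline{Y}$ for $n \in \mathbb{Z}$ have pairwise disjoint interiors, and their union recovers every lift of every pentagon of $Y$, which is precisely $\widetilde{Y}$. Thus $\overline{Y}$ is a fundamental domain for $\langle \alpha \rangle$ acting on $\widetilde{Y}$.

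Third, I would verify the two defining properties of a fundamental domain for the $K$-action on $\mathbb{D}$. For covering: given $p \in \mathbb{D}$, the fundamental domain property of $\widetilde{Y}$ supplies an $r \in \langle R \rangle$ with $r^{-1} p \in \widetilde{Y}$, and then the second step supplies an $n \in \mathbb{Z}$ with $r^{-1} p \in \alpha^n \overline{Y}$, so $p \in r \alpha^n \overline{Y} \subseteq K \overline{Y}$. For interior disjointness: if $r \alpha^n \overline{Y}^{\circ} \cap s \alpha^m \overline{Y}^{\circ} \neq \varnothing$ with $r, s \in \langle R \rangle$ and $n, m \in \mathbb{Z}$, then since $\alpha^n \overline{Y} \subseteq \widetilde{Y}$ the intersection lies in $r \widetilde{Y}^{\circ} \cap s \widetilde{Y}^{\circ}$, forcing $r = s$ by the fundamental domain property of $\widetilde{Y}$; multiplying through by $r^{-1}$ then forces $\alpha^n \overline{Y}^{\circ} \cap \alpha^m \overline{Y}^{\circ} \neq \varnothing$ inside $\widetilde{Y}$, so $n = m$ by the second step.

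The main subtlety I anticipate is the normalization step $\alpha \langle R \rangle \alpha^{-1} = \langle R \rangle$, which is what justifies the product decomposition $K = \langle R \rangle \langle \alpha \rangle$ and lets the two fundamental domain decompositions compose cleanly. This relies on $\alpha$ preserving the strip $\widetilde{Y}$ setwise, not just its interior, so that boundary sides are sent to boundary sides; once this is in hand, the rest is a straightforward combination of the two tilings.
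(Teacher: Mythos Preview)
Your proposal is correct and follows essentially the same approach as the paper's own proof: both arguments hinge on showing that $\alpha$ normalizes $\langle R\rangle$ (the paper phrases this as $K=\langle R\rangle\rtimes_\phi\langle\alpha\rangle$), then write every element of $K$ as $r\alpha^n$ and combine the two fundamental domain facts---$\overline Y$ for $\langle\alpha\rangle$ on $\widetilde Y$, and $\widetilde Y$ for $\langle R\rangle$ on $\D$---to verify covering and interior disjointness. The only cosmetic difference is that the paper checks disjointness of $k\mathring{\overline Y}$ against $\mathring{\overline Y}$ for a single $k=r\alpha^n$, whereas you compare two general translates; these are equivalent.
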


\begin{proof}
We know that $\underset{\alpha^n \in \langle \alpha \rangle}{\bigcup} \alpha^n\, \overline Y = \widetilde Y$, where $\widetilde Y$ is the set of all lifts of the pentagons of $Y$. Since $\widetilde Y$ is a fundamental domain for $\langle R \rangle$, we also know that $\underset{r \in \langle R \rangle}{\bigcup} r\,\widetilde Y = \D$. Therefore, $\underset{k \in K}{\bigcup} k\,\overline Y = \D$. Now must show that $k \, \mathring{\overline Y} \cap \mathring{\overline Y} = \varnothing$, for all $k \in K- \{id\}$. 

Since $\langle \alpha \rangle$ is the deck transformation group for $\D \stackrel{q}{\longrightarrow} X $ and $\overline Y$ contains only one lift of each pentagon in $Y$, we know that $\alpha^n \mathring{\overline Y} \cap \mathring{\overline Y} = \varnothing$ for all $\alpha^n \in \langle \alpha \rangle - \{id\}$.

Next, recall that $R_{y_i}$ denotes a reflection in the side $y_i$ of $\widetilde Y$. If $\alpha^n y_1 = y_2$ where $y_1$ and $y_2$ are sides of $\widetilde Y$, then we have the relation $\alpha^n R_{y_1} = R_{y_2}\alpha^n$ in $K$, in other words $\alpha^n R_{y_1} \alpha^{-n} = R_{y_2}$. In fact, there exists a group homomorphism $\phi: \langle \alpha \rangle \longrightarrow Aut(\langle R \rangle)$, defined by $\phi(\alpha^n) \rightarrow \phi_{\alpha^n}$, where $\phi_{\alpha^n} (r) = \alpha^n r \alpha^{-n}$ for all $r \in \langle R \rangle$ and $\alpha^n \in \langle \alpha \rangle$. Therefore, $K = \langle R \rangle \rtimes_{\phi} \langle \alpha \rangle$, and it follows that every element of $K$ can be written as $r\alpha^n$ where $r \in \langle R \rangle$. 

Now, $\widetilde Y$ is a fundamental domain for $\langle R \rangle$, so $r \, \mathring{\widetilde Y} \cap \mathring{\widetilde Y} = \varnothing$ for every $r \! \in \!\langle R \rangle - \{id\}$. Let $k = r\alpha^n \in K - \{id\}$. If $r \neq id$ then $\alpha^n \mathring{\overline Y} \subset \mathring{\widetilde Y}$ and $r\alpha^n \mathring{\overline Y} \cap \mathring{\widetilde Y} = \varnothing$, and hence $k \mathring{\overline Y} \cap \mathring{\overline Y} = \varnothing$. If $ r = id$ then $\alpha^n \neq id$ and we have shown above that in this case $k\mathring{\overline Y} \cap \mathring{\overline Y} = \alpha^n \mathring{\overline Y} \cap \mathring{\overline Y} = \varnothing$.

\end{proof}

\begin{lemma}
Let $K' = K \cap \pi_1 (\Sigma)$. Then, $[\pi_1(\Sigma)\!:\!K'] \leq [\, \Gamma\!:\!K] = k$.
\end{lemma}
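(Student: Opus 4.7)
The plan is to split the claim into two steps: first identify the index $[\Gamma : K]$ with the integer $k$, and then deduce the desired inequality from a standard group-theoretic fact about intersections.

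For the first step, I would exploit the two overlapping fundamental-domain pictures already in play. The pentagon $P$ is a fundamental domain for $\Gamma$ acting on $\D$, so $\Gamma$ acts simply transitively on the tiles of $T$ via $g \mapsto gP$. By Lemma 5.1 (which I may assume), $\overline{Y}$ is a fundamental domain for $K$, and by construction $\overline{Y}$ is a union of exactly $k$ tiles of $T$, say $\overline{Y} = \bigsqcup_{i=1}^k g_i P$ for some $g_1,\dots,g_k \in \Gamma$ (here we use $K < \Gamma$, which itself follows because $\langle R\rangle < \Gamma$ and $\alpha \in \pi_1(\Sigma) < \Gamma$). Since $K \cdot \overline{Y} = \D$ and the $K$-translates of $\mathring{\overline{Y}}$ are disjoint, the family $\{K g_i\}_{i=1}^k$ is a partition of $\Gamma$ into left cosets; injectivity of this correspondence is forced because two distinct cosets would cause the interiors of the corresponding tiles $g_i P$ and $g_j P$ inside $\overline{Y}$ to be $K$-translated to the same tile, contradicting the fundamental domain property. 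Hence $[\Gamma : K] = k$.

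For the second step, I would apply the elementary inclusion-of-cosets trick. Define the map
\[
\Phi : \pi_1(\Sigma)/K' \longrightarrow \Gamma/K, \qquad \gamma K' \longmapsto \gamma K,
\]
on left cosets. This is well defined because $K' = \pi_1(\Sigma)\cap K \subseteq K$, and it is injective: if $\gamma_1 K = \gamma_2 K$ with $\gamma_1,\gamma_2 \in \pi_1(\Sigma)$, then $\gamma_2^{-1}\gamma_1 \in K \cap \pi_1(\Sigma) = K'$, so $\gamma_1 K' = \gamma_2 K'$. Therefore $[\pi_1(\Sigma) : K'] \leq [\Gamma : K]$, and combining with the first step gives $[\pi_1(\Sigma) : K'] \leq k$.

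There is no serious obstacle here; the only point that demands attention is verifying that the tile decomposition $\overline{Y} = \bigsqcup g_i P$ really does give a set of coset representatives for $K$ in $\Gamma$. This is a direct consequence of the Poincar\'e Polygon Theorem applied to both $\Gamma$ (with fundamental domain $P$) and $K$ (with fundamental domain $\overline{Y}$), but I would state it explicitly so that the equality $[\Gamma:K] = k$ is transparent before invoking the coset-injection argument.
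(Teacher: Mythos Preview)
Your proposal is correct and follows essentially the same approach as the paper. The paper argues $[\Gamma:K]=k$ from $\overline{Y}$ being a $k$-pentagon fundamental domain for $K$, then quotes Poincar\'e's Theorem (from Jacobson) that $[H_1:H_1\cap H_2]\leq[G:H_2]$; your coset-injection map $\Phi$ is precisely the standard proof of that theorem, so you have simply unpacked what the paper cites.
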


\begin{proof}
Since $\overline Y$ is a fundamental domain for $K$ and consists of $k$ pentagons, we know that $[\, \Gamma\!:\!K] = k$. Poincar\'{e}'s Theorem \cite{Jacobson} states that if $H_1$ and $H_2$ are subgroups of a group $G$, then $[H_1\!:\!H_1 \cap H_2] \leq [G\!:\!H_2] $. The lemma follows by letting $G = \Gamma$, $H_1 = \pi_1(\Sigma)$ and $H_2 = K$ in Poincar\'{e}'s Theorem. 

\end{proof}

Next, we let $N$ be the cover of $\Sigma$ corresponding to the subgroup $K' = K \cap \pi_1 (\Sigma)$ of $\pi_1 (\Sigma)$. That is $ N = \D/ K'$. Let $s: \D \longrightarrow N$ be the covering map. 

\begin{lemma}
The image of $\widetilde \alpha$ under $s$ is an embedded loop in N.
\end{lemma}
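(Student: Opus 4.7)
My plan is to prove two assertions: first, that $s(\widetilde{\alpha})$ closes up into a loop in $N$; second, that this loop is embedded.

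For the first assertion, I would observe that $\alpha$ lies in $K = \langle R, \alpha \rangle$ by construction and in $\pi_1(\Sigma)$ by hypothesis, so $\alpha \in K' = K \cap \pi_1(\Sigma)$. Letting $p$ and $q$ denote the two endpoints of $\widetilde{\alpha}$, chosen so that $\alpha p = q$ (the two endpoints differ by the translation $\alpha$ along the axis $L$), they project to the same point of $N = \D/K'$. Hence $s(\widetilde{\alpha})$ is a closed curve.

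For the second assertion, the crucial geometric observation is that the interior of $\widetilde{\alpha}$ lies in the interior of $\overline{Y}$. Indeed, $\widetilde{\alpha}$ passes only through pentagons lifted into $\overline{Y}$, namely the chosen lifts of those pentagons of $S_0$ meeting $\overline{\alpha}$, and every pentagon edge that $\widetilde{\alpha}$ crosses transversely is shared between two such consecutive lifts, hence lies in the interior of $\overline{Y}$. Granting this, suppose $s(x) = s(y)$ for points $x, y \in \widetilde{\alpha}$; then $y = k' x$ for some $k' \in K' \subseteq K$. If $x$ is an interior point of $\widetilde{\alpha}$, then $x$ lies in the interior of $\overline{Y}$, so $y = k'x$ lies in $k' \cdot \operatorname{int}(\overline{Y})$. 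For $k' \neq \mathrm{id}$, Lemma 5.1 applied to $K$ gives that $k' \cdot \operatorname{int}(\overline{Y})$ is an open set disjoint from $\operatorname{int}(\overline{Y})$; since $\overline{Y}$ is the closure of its interior, this open set must be disjoint from all of $\overline{Y}$, forcing $y \notin \overline{Y}$ and contradicting $y \in \widetilde{\alpha} \subset \overline{Y}$. Hence $k' = \mathrm{id}$ and $x = y$. The symmetric argument handles the case where $y$ is interior. So whenever $x \neq y$, both points must be endpoints of $\widetilde{\alpha}$, i.e., the pair $\{p, q\}$ with $\alpha p = q$; this is the loop-closing identification and not a genuine self-intersection. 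Therefore $s(\widetilde{\alpha})$ is embedded.

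The step I expect to be the main obstacle is verifying the interior claim, namely that no interior point of $\widetilde{\alpha}$ meets a boundary edge of $\overline{Y}$. This hinges on the careful setup of Section 5: the basepoint $a$ is chosen on an edge shared between two pentagons of $S_0$, and $\overline{Y}$ is defined as the connected union of one lift per pentagon of $Y$ that contains all of $\widetilde{\alpha}$, forcing successive pentagons traversed by $\overline{\alpha}$ to appear adjacently in $\overline{Y}$. Once that geometric point is established, everything else follows formally from the fundamental-domain property of $\overline{Y}$ for $K$ (Lemma 5.1) together with the inclusion $K' \subseteq K$.
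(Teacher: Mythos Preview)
Your proof is correct and follows essentially the same approach as the paper: both arguments reduce to the fundamental-domain property of $\overline{Y}$ for $K$ (Lemma~5.1) together with the key geometric fact that $\mathring{\widetilde{\alpha}} \subset \mathring{\overline{Y}}$. The paper phrases this via the factorization $\D \to \D/K' \to \D/K$, first showing $\widetilde{\alpha}$ maps to an embedded loop in $\D/K$ and then pulling embeddedness back; you instead argue directly in $\D/K'$ using $K' \subseteq K$, which amounts to inlining that factorization.
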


\begin{proof}
Since $K'$ is a subgroup of $K$, the covering map $f: \D \longrightarrow \D/K$ factors as $f = u \circ s$ where $s$ and $u$ are the covering maps in the sequence $\D \stackrel{s}{\longrightarrow} \D/K' \stackrel{u}{\longrightarrow} \D/K$. Let $\alpha' = s(\widetilde \alpha)$. We will first show that $f(\widetilde \alpha)$ is an embedded loop in $\mathbb{D}/K$, and then use this fact to show that $\alpha'$ is an embedded loop in $N = \mathbb{D}/K'$.

 Since $\alpha \in K$ we know that $f(\widetilde \alpha)$ is certainly a loop in $\mathbb{D}/K$. By Lemma 5.1 the set $\overline Y$, defined above, is a fundamental domain for the action of $K$ on $\mathbb{D}$. Recall that $\overline \alpha$ is the lift of $\alpha$ that is the unique simple closed geodesic in the cover $X$. Also recall that $\widetilde \alpha$ is the only lift of $\overline \alpha$ in $\overline Y$, and that $\widetilde \alpha$ is a simple geodesic arc in $\overline Y$ with endpoints in $\partial \overline Y$. Since $\overline Y$ is a fundamental domain for the action of $K$, the restriction of $f$ to $\mathring{\overline Y}$ is a homeomorphism into $\mathbb{D}/K$. Therefore, $\mathring{\widetilde \alpha}$ also projects by a homeomorphism into $\mathbb{D}/K$ since $\mathring{\widetilde \alpha} \subset \mathring{\overline Y}$. Thus, $f(\widetilde \alpha)$ is an embedded loop in $\mathbb{D}/K$.

Now, since $\alpha \in K'$ we know that $\alpha' =s(\widetilde \alpha)$ is also a loop in $N=\mathbb{D}/K'$. But, $f(\widetilde \alpha) = u(s(\widetilde \alpha))$ so that if $s(\widetilde \alpha)$ is not an embedded loop in $N$, $f(\widetilde \alpha)$ cannot be an embedded loop in $\mathbb{D}/K$. That is, if $x_1$ and $x_2$ are two points of $\widetilde \alpha$ that are identified under the map $s$, then $f(x_1) = u(s(x_1)) = u(s(x_2)) = f(x_2)$ is a self intersection point of $f(\widetilde \alpha)$. Thus, $\alpha'$ must be an embedded loop in $N = \mathbb{D}/K'$. 

\end{proof}

We have now proved Theorem 1.1, and actually have proved the following stronger result. 

\begin{theorem}
Let $\Sigma$ be a closed surface of negative Euler characteristic, endowed with the standard metric. For every closed geodesic $\alpha$ in $\Sigma$, there exists a finite cover $X_{\alpha}$ of $\Sigma$ in which $\alpha$ lifts to an embedded loop. The index of the cover is bounded by $16.2\ell$, where $\ell$ is the length of the geodesic $\alpha$. If $\alpha$ is the image in $\Sigma$ of a geodesic line in our tessellation of $\D$, the index of the cover is bounded by $3.1\,\ell$.
\end{theorem}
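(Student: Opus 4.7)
The plan is to assemble the machinery of Sections 3--5, since every substantive step has already been packaged as a lemma or corollary; the proof is essentially a matter of threading them together in the right order and tracking the index bound along each arrow.

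First I pass to the cyclic cover $X \to \Sigma$ corresponding to $\langle \alpha \rangle$, in which $\alpha$ has the embedded lift $\overline\alpha$ singled out in Section 2. I form $S_0$, the union of pentagons in the standard tiling of $X$ that meet $\overline\alpha$, and let $Y$ be its convexification produced by Theorem 3.1. By Corollary 4.4 the number of pentagons $k$ in $Y$ satisfies $k \leq 16.2\,\ell$, and by Corollary 4.5 one has the sharper bound $k \leq 3.1\,\ell$ in the special case that $\overline\alpha$ is the image of a geodesic line of the tessellation. This provides the numerical content of the theorem.

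Next I promote $Y$ to a fundamental domain upstairs. Lift $Y$ to a connected union $\overline Y \subset \D$ of $k$ pentagons containing exactly one lift $\widetilde\alpha$ of $\overline\alpha$, let $R$ be the set of reflections in the sides of the full lift $\widetilde Y$, and set $K = \langle R, \alpha\rangle$. By Lemma 5.1, $\overline Y$ is a fundamental domain for the action of $K$ on $\D$, so $[\Gamma : K] = k$. Setting $K' = K \cap \pi_1(\Sigma)$, Lemma 5.2 yields $[\pi_1(\Sigma):K'] \leq k$. I then take the finite cover $X_\alpha := \D/K'$ of $\Sigma$, whose index is bounded by the same $k$, and apply Lemma 5.3 to conclude that the image of $\widetilde\alpha$ under the covering map $\D \to X_\alpha$ is an embedded loop. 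By construction this loop is a lift of $\alpha$, and combining the two estimates for $k$ yields both the $16.2\,\ell$ bound and the sharper $3.1\,\ell$ bound in the tessellation case.

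There is no real obstacle remaining, since the heavy lifting is all upstream. The only points that require any care are bookkeeping: confirming that $\overline Y$ really can be chosen to contain a single lift of $\overline\alpha$ (immediate from the construction of $Y$ as a union of pentagons each meeting or adjacent to $\overline\alpha$), and confirming that the cover $X_\alpha \to \Sigma$ produced abstractly from $K' \leq \pi_1(\Sigma)$ is in fact the cover in which $s(\widetilde\alpha)$ is the desired embedded lift of $\alpha$ --- both of which follow directly from $K' = K \cap \pi_1(\Sigma)$ together with the factorization of covering maps used in the proof of Lemma 5.3.
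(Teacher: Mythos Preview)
Your proposal is correct and follows exactly the paper's own approach: assemble Lemmas 5.1, 5.2, and 5.3 together with Corollaries 4.4 and 4.5 to identify $X_\alpha = \D/K'$ and read off the index bound $[\pi_1(\Sigma):K'] \le k$. The paper's proof is simply a more compressed version of what you wrote.
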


\begin{proof} 
Our finite cover $X_{\alpha}$ is the cover $N = \D/K' $ of Lemma 5.3, and the lift of $\alpha$ that is an embedded geodesic loop is $\alpha'$. $\pi_1(N) = K'$ and by Lemma 5.2 and Corollary 4.4,  $[\pi_1(\Sigma)\!:\!K'] < k < 16.2 \ell$. Thus, the index of $X_{\alpha}$ as a cover of $\Sigma$ is bounded by $16.2\,\ell$. If $\alpha$ is the image of a line in our tiling of $\D$, $k < 3.1\, \ell$ and the result follows.

\end{proof} 

We have proved Theorem 5.4 for any closed surface of negative Euler characteristic with the standard metric. However, the theorem also holds for compact surfaces with boundary of negative Euler characteristic. In \cite{Scott}, Peter Scott showed that for the closed case, $\Sigma$ can be tiled by regular, right-angled pentagons, and thus proved that $\pi_1(\Sigma) < \pi_1(F) < \Gamma$. It turns out that compact surfaces with boundary, of negative Euler characteristic, can also be tiled by regular, right-angled pentagons. We endow such a surface $\Sigma$ with the metric obtained through this tiling by pentagons, and call this the standard metric as well. With the standard metric, the universal cover of $\Sigma$ is a convex, non-compact polygon in $\D$, which we call $\widetilde \Sigma$. Therefore, $\pi_1(\Sigma)$ acts on $\widetilde \Sigma$ by isometries, but these isometries extend to isometries of $\D$.  Then considering $\pi_1(\Sigma)$ as a group of isometries of $\D$, we have that $\pi_1(\Sigma) <\Gamma$. Once we have this result, the rest of the proof follows exactly as in the closed case.

\section{Hyperbolic Surface Groups are Residually Finite}
\noindent {\bf Definition.} A group $G$ is said to be \emph{residually finite (RF)} if for every non-trivial element $g \in G$, there is a subgroup $G'$ of finite index in $G$ that does not contain $g$.

\vspace{.2 in}

Let $\Sigma$ be a compact surface. From \cite{Scott}, we know that $\pi_1(\Sigma)$ is RF. We will quantify this result by proving the following theorem, which is a slightly stronger result than Theorem 1.2.

\begin{theorem}
Let $\Sigma$ be a compact surface Euler characteristic endowed with the standard metric. For any $\alpha \in \pi_1(\Sigma)-\{id\}$, there exists a subgroup $H'$ of $\pi_1(\Sigma)$, such that $\alpha \notin H'$. Additionally, $[\pi_1(\Sigma)\!:\!H'] < 32.3\,\ell$, where $\ell$ is the length of the geodesic $\alpha$. If $\alpha$ is the image of a geodesic line in our tessellation of $\D$, $[\pi_1(\Sigma)\!:\!H'] < 6.2\,\ell$.
\end{theorem}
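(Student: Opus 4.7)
The plan is to build, for each nontrivial $\alpha$, a fundamental domain for a subgroup of $\Gamma$ that is \emph{twice as big} as the domain $\overline Y$ from Section 5, arranged so that $\alpha^2$ (but not $\alpha$ itself) lies in the subgroup. All other ingredients carry over verbatim from the proof of Theorem 5.4.

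First, repeat the construction of Sections 3--5: form the pentagon set $S_0$ hit by $\overline \alpha$, convexify it to $Y$, lift to a connected $k$-pentagon set $\overline Y \subset \D$ containing a lift $\widetilde \alpha$ of $\overline \alpha$, and keep the notation $R$, $K$, $L$. Now double the domain: set
\[
\overline Y_2 \;=\; \overline Y \,\cup\, \alpha(\overline Y),
\]
a connected union of $2k$ pentagons in the tiling of $\D$ that still contains $\widetilde \alpha$. Let $R_2$ be the set of reflections in those sides of $\overline Y_2$ that lie on $\partial \widetilde Y$, and set $K_2 = \langle R_2,\, \alpha^2\rangle$.

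Next, verify via the Poincar\'e Polygon Theorem that $\overline Y_2$ is a fundamental domain for $K_2$. The internal edge where $\overline Y$ meets $\alpha(\overline Y)$ is no longer on the boundary, and the two outermost ``transverse'' sides of $\overline Y_2$ are now paired by $\alpha^2$ rather than $\alpha$; the remaining edges are paired in pairs by reflections in $R_2$, exactly as for $\overline Y$. The edge-cycle and vertex-angle hypotheses of the Poincar\'e Polygon Theorem therefore hold for $\overline Y_2$ for the same reasons they held for $\overline Y$ in Lemma 5.1. The same semidirect-product argument shows $K_2 \cong \langle R_2\rangle \rtimes_\phi \langle \alpha^2\rangle$ and $[\Gamma : K_2] = 2k$.

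Now exclude $\alpha$ from $K_2$: if $\alpha$ were in $K_2$, then $\alpha \overline Y_2$ and $\overline Y_2$ would have disjoint interiors, but
\[
\alpha \overline Y_2 \;=\; \alpha(\overline Y) \cup \alpha^2(\overline Y),
\]
which shares the full pentagon block $\alpha(\overline Y)$ with $\overline Y_2$. Finally, set $H' = K_2 \cap \pi_1(\Sigma)$. By Poincar\'e's subgroup theorem (as in Lemma 5.2),
\[
[\pi_1(\Sigma) : H'] \;\le\; [\Gamma : K_2] \;=\; 2k,
\]
and $\alpha \notin K_2$ forces $\alpha \notin H'$. Corollary 4.4 gives $2k \leq 2(16.131)\ell < 32.3\,\ell$; Corollary 4.5 yields the $6.2\,\ell$ bound when $\overline \alpha$ is a geodesic line in the tessellation. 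The extension to compact surfaces with boundary is identical to the closing paragraph of Section~5. The main subtlety is Step~(ii)---checking that the Poincar\'e Polygon Theorem still applies after doubling the domain, in particular that no new vertex-cycle obstruction appears at the two interior vertices where the gluing face of $\overline Y$ and $\alpha(\overline Y)$ meets $\partial \widetilde Y$; this follows because those vertices are ordinary interior vertices of the pentagon tiling of $\widetilde Y$ and contribute angle-sum $2\pi$ by construction.
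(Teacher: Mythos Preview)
Your argument is correct, but the route differs from the paper's. You pass to the index--$2$ subgroup $K_2=\langle R,\alpha^2\rangle=\langle R\rangle\rtimes\langle\alpha^2\rangle$ of the group $K$ already built in Section~5, and then exclude $\alpha$ either by the overlap of $\alpha(\overline Y)$ or (equivalently) by the semidirect-product decomposition. The paper instead \emph{re-lifts} $Y$ so that the single block $\overline Y$ is convex in $\D$ (by opening along a tessellation line through the basepoint), doubles to a convex $2k$-pentagon polygon $Y'=\overline Y\cup\alpha(\overline Y)$, and takes $H$ to be the pure reflection group in the sides of $Y'$; one endpoint of $\widetilde\alpha$ lies in $\mathring{Y'}$, so $b(\widetilde\alpha)$ cannot close up in $\D/H$. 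Thus your $K_2$ contains the translation $\alpha^2$ while the paper's $H$ is generated entirely by reflections; they are genuinely different index--$2k$ subgroups of $\Gamma$.

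Two remarks on presentation. First, your appeal to the Poincar\'e Polygon Theorem for $\overline Y_2$ is misplaced: Lemma~5.1 never invokes that theorem for $\overline Y$, and $\overline Y_2$ need not be convex. What you actually use---and what is sufficient---is that $K_2$ has index $2$ in $K$, so $\overline Y\cup\alpha(\overline Y)$ is a fundamental domain for $K_2$ and $[\Gamma:K_2]=2k$ follow immediately. Second, your formula $K_2\cong\langle R_2\rangle\rtimes\langle\alpha^2\rangle$ should read $\langle R\rangle\rtimes\langle\alpha^2\rangle$; the finite set $R_2$ does not by itself generate all of $\langle R\rangle$ until you conjugate by powers of $\alpha^2$.

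As to what each approach buys: yours is shorter here and recycles Section~5 wholesale without the relifting step. The paper's construction, however, produces a convex polygon and a pure reflection group, and it is precisely this convex--polygon/reflection--group template that is reused and extended in Section~7 to prove the LERF bound; it also makes transparent the remark following the proof that one can sharpen the constant by adding only a few pentagons near one endpoint of $\widetilde\alpha$ rather than a full second copy of $\overline Y$.
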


\begin{proof}
Let $\alpha \in \pi_1(\Sigma) - {id}$. Using the same notation as in Section 5, we let $\widetilde \alpha$ be one lift of $\overline \alpha$ to $\D$. We will now show that we can lift $Y$ to $\D$ so that the result is a convex union of $k$ pentagons. The convexity of the lift is crucial since we will want to apply the Poincar\'{e} Polygon Theorem to prove the result above.

Let $Z$ be the set defined in the proof of Theorem 4.3. We lift $Z$ to a lift in $\D$ that contains $\widetilde \alpha$. Recall that in the proof of Theorem 4.3 we lifted $Z$ so that it opened along the geodesics $\beta_i$.  Instead we lift $Z$ so that it opens along the geodesic line of our tessellation of $X$ containing the basepoint of $\overline \alpha$ and shown on the left in Figure~7(A) below. Call this set $\overline Z \subset \D$ and lift every pentagon of $Y$ to its lift that lies in $\overline Z$. The result is a convex, connected union of $k$ pentagons which we will call $\overline Y$ (see Figure~7(B)).

\begin{figure}[h]
\centering
\subfloat[]{\label{} \includegraphics[trim = 1in 3.5in 1in 4.25in, clip=true, totalheight=0.14\textheight]{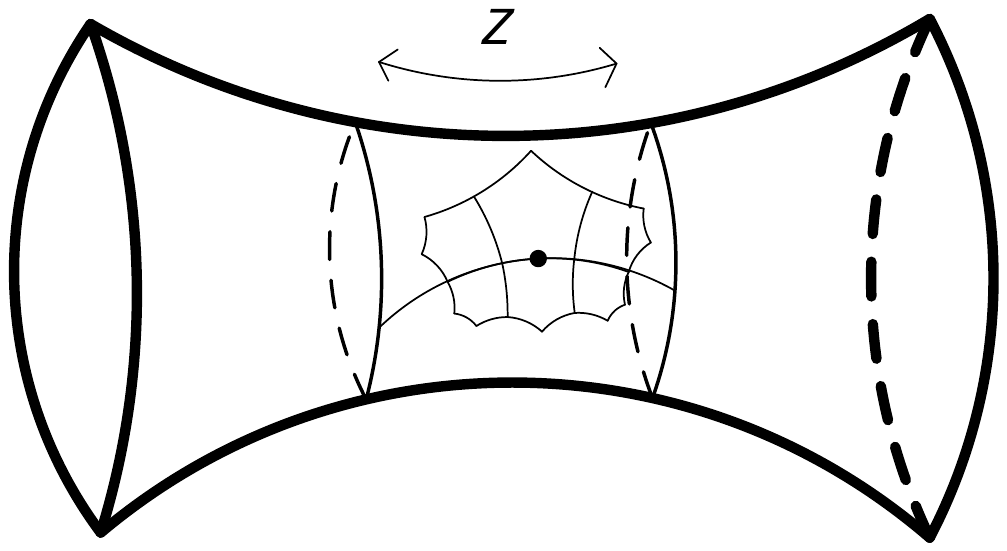}}
\subfloat[]{\label{} \includegraphics[trim = .5in 3.5in .5in 3.5in, clip=true, totalheight=0.15\textheight]{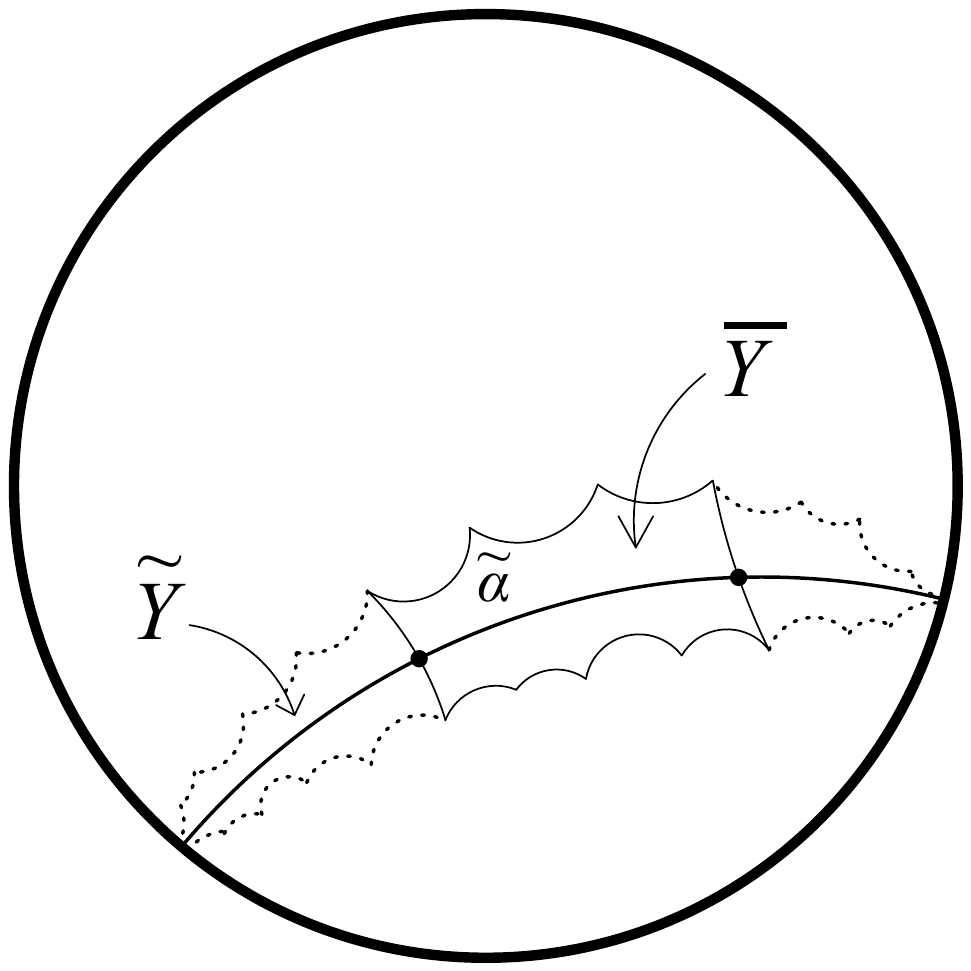}}
\caption{}
\end{figure}

Let $\widetilde \alpha_1$ be a lift of $\overline \alpha$ that shares an endpoint with $\widetilde \alpha$, and let $\overline Y_1$ be the lift of $Y$ containing $\widetilde \alpha_1$. Then $Y' = \overline Y \cup \overline Y_1$ is a convex union of $2k$ pentagons in $\D$, such that one endpoint of $\widetilde \alpha$ is contained in the interior of $Y'$.

Let $H$ be the group of isometries of $\D$ generated by reflections in the sides of $Y'$. Then $H < \Gamma$, and $Y'$ is a fundamental domain for the action of $H$ on $\D$ by the Poincar\'{e} Polygon Theorem \cite{Hubbard}. Since $Y'$ contains $2k$ pentagons, $[\Gamma\!:\!H] = 2k$. Letting $b: \D \longrightarrow \D/H$ be the covering map, we then have that the restriction of $b$ to $\mathring{Y'}$ is a homeomorphism onto its image in $X'$. Thus, $b(\widetilde \alpha)$ is not a loop in $X'$, and $\alpha \notin H$.

Now, let $H' = H \cap \pi_1(\Sigma)$. Then, $\alpha \notin H'$ and $[\pi_1(\Sigma)\!:\!H'] \leq [\Gamma\!:\!H] = 2k$.  The result follows from Corollary 4.4 and Corollary 4.5. 
 
\end{proof}

One should note that the bound in Theorem 6.1 can certainly be improved. Instead of adding one more set of $k$ pentagons to $\overline Y$, we could have simply added a small number of pentagons in order to encapsulate one endpoint of $\widetilde \alpha$ and retain convexity. We used the method above to simplify the proof.

\section {Hyperbolic Surface Groups are LERF}
{\bf Definition.} A group $G$ with a subgroup $S$ is \emph{S-residually finite} if for any element $g$ of $G-S$, there is a subgroup $G'$ of finite index in $G$ which contains $S$ but not $g$. A group $G$ is called \emph{locally extended residually finite (LERF)} if $G$ is S-residually finite for every {\bf finitely generated} subgroup $S$ of $G$.

\vspace{.15 in}

Let $\Sigma$ be a compact surface. From \cite{Scott}, we know that $\pi_1(\Sigma)$ is LERF, and we will attempt to quantify this result. Just as for Theorem 5.4, we will prove the result in the closed case, and then will see that the compact with boundary case follows immediately.

Let $\Sigma$ be a closed surface of negative Euler characteristic with the standard metric, and let $S$ be a finitely generated subgroup of $\pi_1(\Sigma)$ with $g \in \pi_1(\Sigma)-S$. If $S$ is a finite index subgroup, then $S$ itself is the required subgroup for the LERF condition. Thus, we will be interested in the case where $S$ is a finitely generated, infinite index subgroup of $\pi_1(\Sigma)$. 

Let $X$ be the cover of $\Sigma$ corresponding to such a subgroup $S$. If we pull back the standard metric on $\Sigma$ to $X$, then $X$ is a noncompact hyperbolic surface of finite type. Thus, $\pi_1(X) \cong S$ is a free group of rank $n$.

Let $\gamma \in \pi_1(\Sigma) - S$.  As per our convention, we also let $\gamma$ be the unique geodesic representative in this homotopy class and let $\ell_\gamma$ be the length of $\gamma$. Let $\widetilde \gamma$ be a lift of $\gamma$ to $X$. Since $\gamma \notin S$, $\widetilde \gamma$ is a (non-closed) geodesic path in $X$. 

Let $C(X)$ be the convex core of $X$, that is, $C(X)$ is the smallest, closed, convex subsurface of $X$ with geodesic boundary, such that $i: C(X) \longrightarrow X$ is a homotopy equivalence. Choose the basepoint, $x_0$, of $X$ to be in $C(X)$, and let $\alpha_1, \dots, \alpha_m$ be the geodesic boundary components of $C(X)$ of lengths $\ell_1, \dots, \ell_m$, respectively.

We will quantify Peter Scott's LERF theorem by proving the following result.

\begin{theorem}
Let $\Sigma$ be a compact surface of negative Euler characteristic with the standard metric. If $S$ is an infinite index, finitely generated subgroup of $\pi_1(\Sigma)$ and $\gamma \in \pi_1(\Sigma) - S$ as described above, then there exists a finite index subgroup $K'$ of $\pi_1(\Sigma)$, such that $S \subseteq K'$ and $\gamma \notin K'$. When the rank of $S$ is $n \geq 2$, the index of $K'$ in $\pi_1(\Sigma)$ can be bounded as follows. If $\widetilde \gamma \subset C(X)$,

\begin{equation}
[\pi_1(\Sigma)\!:\!K'] < 4n - 4 + 8.1 (\ell_1 + \cdots + \ell_m)
\end{equation}

\noindent and if $\widetilde\gamma \not\subset C(X)$,

\begin{equation}
[\pi_1(\Sigma)\!:\!K'] < 4n - 4 + \frac{2 \, \sinh \left[(\ell_\gamma/e_P \, + \, 2) \, d_0\right]}{\pi}\,(\ell_1 + \ell_2 + \cdots + \ell_m),
\end{equation}

\vspace{.15 in}

\noindent where $e_P$ is the length of the edges and $d_0$ is the diameter in a regular, right-angled, hyperbolic pentagon calculated in Section 4. If $\alpha_j$ is the image of a line in our tessellation of $\D$ for some j, the coefficient of $\ell_j$ can be improved to $1.6$ instead of $8.1$ in equation \emph{(1)}.

\vspace{.1 in}
\noindent In the case where the rank of $S$ is $n =1$, we must double the coefficients of the $\ell_1$ in equations \emph{(1)} and \emph{(2)} and we arrive at the following bounds:

\noindent If $\widetilde \gamma \subset C(X)$,

\begin{equation}
[\pi_1(\Sigma)\!:\!K'] < 16.2 \, \ell_1
\end{equation}

\noindent and if $\widetilde\gamma \not\subset C(X)$,

\begin{equation}
[\pi_1(\Sigma)\!:\!K'] < \frac{4 \, \sinh \left[(\ell_\gamma/e_P \, + \, 2) \, d_0\right]}{\pi}\, \ell_1. 
\end{equation}

\end{theorem}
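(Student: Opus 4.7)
The plan is to extend the strategy of Section 6 from the cyclic case to the general finitely generated, infinite index case, with the convex core $C(X)$ playing the role formerly played by a pentagonal neighborhood of a single closed geodesic. Since $X$ is a noncompact hyperbolic surface of finite type whose fundamental group is free of rank $n$, the convex core $C(X)$ is a compact subsurface with geodesic boundary $\alpha_1,\ldots,\alpha_m$, and Gauss--Bonnet gives $\mathrm{Area}(C(X)) = -2\pi\chi(C(X)) = 2\pi(n-1)$. Because each tiling pentagon has area $\pi/2$, the minimal union of pentagons containing $C(X)$ uses at most $4n-4$ pentagons, which will produce the first term in (1) and (2).

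First I would build a convex polygonal subsurface $Y\subset X$ containing $C(X)$ and, when needed, $\widetilde\gamma$. Start with the union of all tiling pentagons meeting $C(X)$, and then apply the convexification procedure of Section 3 to each boundary geodesic $\alpha_j$. Because $C(X)$ already occupies the ``inside'' of each $\alpha_j$, only a one-sided pentagonal collar of $\alpha_j$ needs to be convexified, so the argument of Theorem 4.3 and Corollary 4.4 costs at most $8.1\,\ell_j$ additional pentagons (or $1.6\,\ell_j$ when $\alpha_j$ is the image of a geodesic line in the tessellation of $\D$); this is precisely the halving of the two-sided bound that produces the constants in (1). If $\widetilde\gamma\subset C(X)$ we stop here, obtaining (1). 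Otherwise $\widetilde\gamma$ exits through one or more funnels of $X\setminus C(X)$, and we extend the pentagonal collar along each such funnel by just enough pentagons to engulf the portion of $\widetilde\gamma$ lying outside $C(X)$; that portion has length at most $\ell_\gamma$, so roughly $\ell_\gamma/e_P$ additional pentagon widths of radial extension are required, and Lemma 4.2 applied to a radius $(\ell_\gamma/e_P + 2)\,d_0$ neighborhood of $\alpha_j$ bounds the extra pentagons by $\frac{2\sinh[(\ell_\gamma/e_P+2)d_0]}{\pi}\,\ell_j$, yielding (2).

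Next I would lift the construction to $\D$. Let $\overline Y\subset\D$ be a connected lift of $Y$; it is a convex compact polygon tiled by $k$ pentagons, where $k$ is the total count above. Let $K$ be the subgroup of $\Gamma$ generated by $S$ together with reflections in the outer sides of $\overline Y$. Exactly as in Lemma 5.1 and the proof of Theorem 6.1, the Poincar\'{e} Polygon Theorem implies that $\overline Y$ is a fundamental domain for $K$ acting on $\D$, so $[\Gamma\!:\!K]=k$. Setting $K'=K\cap\pi_1(\Sigma)$, the subgroup-index inequality used in Lemma 5.2 gives $[\pi_1(\Sigma)\!:\!K']\le k$; by construction $S\subseteq K\cap\pi_1(\Sigma)=K'$. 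Because $\widetilde\gamma$ either lies entirely inside $\mathring{\overline Y}$ (when $\widetilde\gamma\subset C(X)$) or has at least one endpoint inside $\mathring{\overline Y}$ (by the funnel extension step), its projection to $\D/K$ is not a loop, so $\gamma\notin K$ and hence $\gamma\notin K'$. The $n=1$ case requires doubling: there $C(X)$ degenerates to the single closed geodesic $\alpha_1$, so the $4n-4$ term vanishes but both sides of $\alpha_1$ must be collared, doubling $8.1$ to $16.2$ and $2$ to $4$ in the coefficient of $\ell_1$, producing (3) and (4).

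The main obstacle is the funnel extension when $\widetilde\gamma\not\subset C(X)$: one must verify that pushing the collar radially out along a funnel preserves the convexity of $Y$ and produces a lift $\overline Y$ that is still a convex fundamental polygon in $\D$, while simultaneously enclosing enough of $\widetilde\gamma$ that its projection to $\D/K$ fails to close up. Estimating the added pentagons so that the coefficient $\frac{2\sinh[(\ell_\gamma/e_P+2)d_0]}{\pi}$ is correct and accounts for every funnel $\widetilde\gamma$ enters, and confirming that the neighborhood of Lemma 4.2 remains embedded in the funnel throughout this extension, are the most delicate points of the argument.
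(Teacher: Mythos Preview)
Your approach is essentially the paper's: build a convex pentagonal subsurface $Y\supset C(X)$ (and extend by layers in Case~2), form $K=\langle R,S\rangle=\langle R\rangle\rtimes S$ with fundamental domain $\overline Y$, and set $K'=K\cap\pi_1(\Sigma)$. Two points in your write-up need tightening, though neither is fatal.

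First, the sentence ``the minimal union of pentagons containing $C(X)$ uses at most $4n-4$ pentagons'' is not correct as stated: pentagons straddling $\partial C(X)$ make any pentagon union containing $C(X)$ strictly larger in area than $\mathrm{Area}(C(X))=2\pi(n-1)$. The paper avoids this by never counting pentagons inside $C(X)$ separately; instead it bounds $\mathrm{Area}(Y)\le \mathrm{Area}(C(X))+\sum_i\mathrm{Area}(Z_i)$ and divides by $\pi/2$, so the $4n-4$ arises purely as $\mathrm{Area}(C(X))/(\pi/2)$ while every pentagon meeting $\alpha_i$ is absorbed into the $Z_i$ term. Your collar estimate is consistent with this once you switch to the area inequality.

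Second, in Case~2 your justification that $\gamma\notin K$ (``has at least one endpoint inside $\mathring{\overline Y}$'') is not sufficient on its own; one interior endpoint does not prevent the projection to $\mathbb D/K$ from closing up. What the paper uses, and what your layer-extension step in fact achieves, is that \emph{all} of $\widetilde\gamma$ lies in $\mathring{Y'}$, so a lift of $\widetilde\gamma$ lies in the interior of a single fundamental domain $\overline{Y'}$ and its two (distinct) endpoints cannot be identified by $K$. Finally, note that $\overline Y$ need not be convex (and the paper does not claim it is); convexity is only used for $\widetilde Y=q^{-1}(Y)$ so that the Poincar\'e Polygon Theorem applies to $\langle R\rangle$, after which the semidirect product argument of Lemma~5.1 handles $K$ without any convexity assumption on $\overline Y$.
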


\vspace{.2 in}

\begin{proof}
We handle the case where $n \geq 2$ and will elaborate on the case where $n = 1$ at the end of the proof. 

We will be interested in extending $C(X)$ at the boundary components $\alpha_i$ in order to obtain a convex union of pentagons containing $\widetilde \gamma$ in its interior. Then we will apply the same methods we used to prove the RF case. 

Let $S_i$ be the set of pentagons in the tiling of $X$ whose intersection with $\alpha_i$ is non-empty. Let $Y_i$ be the one sided convexification, i.e. the convexification of the side of $\alpha_i$ in $X - C(X)$, of each set $S_i$, obtained by the procedure in Section 3 and shown in Figure~8 below. 

\begin{figure}[h]
\centering
 \includegraphics[trim = .5in 2.5in .45in 2.5in, clip=true, totalheight=0.15\textheight]{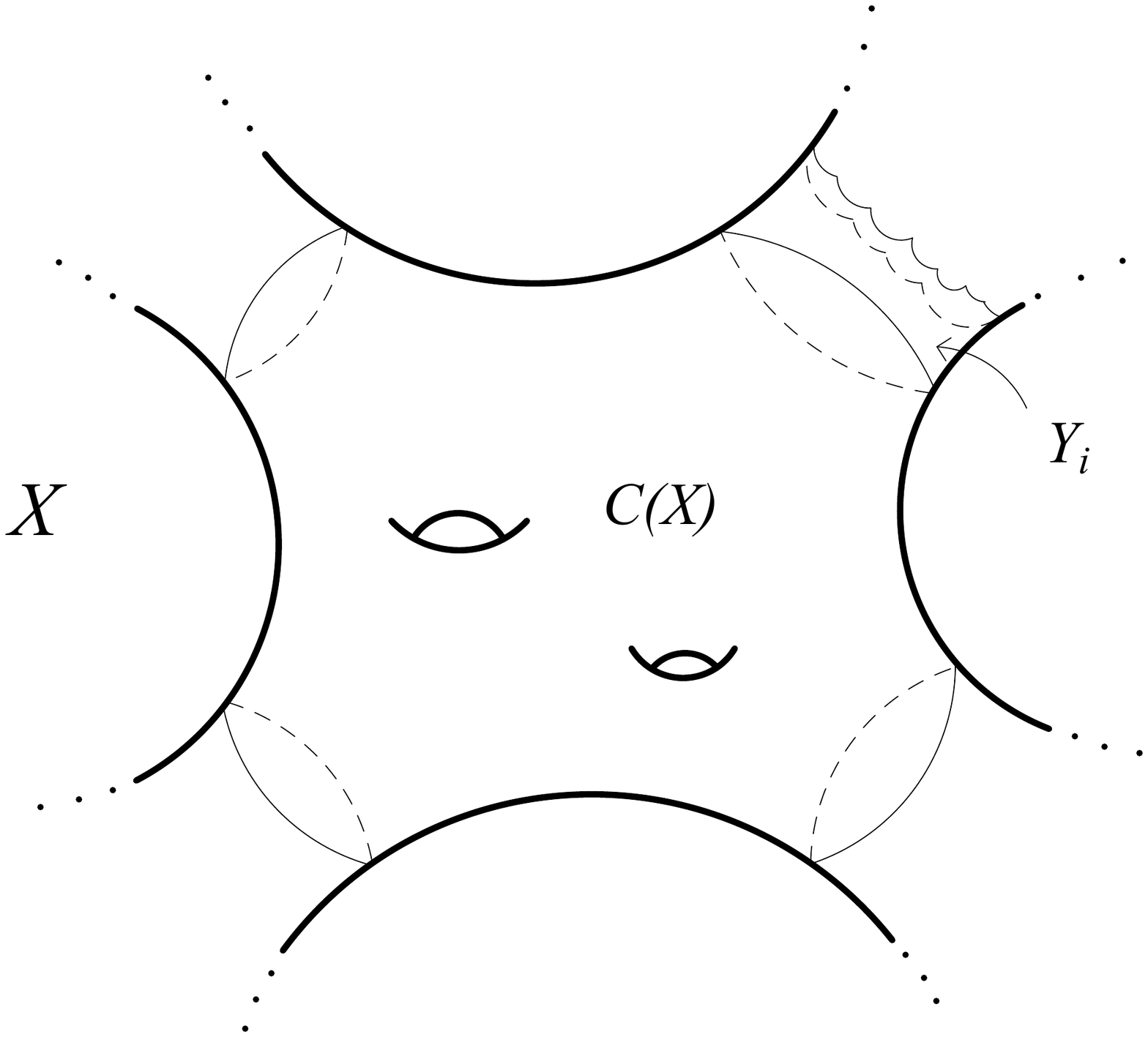}
 \caption{}
\end{figure}

Note that though $X$ may be unorientable, it makes sense to talk about the side of $\alpha_i$ in $C(X)$ and the side of $\alpha_i$ in $X-C(X)$ because $\alpha_i$ admits a bi-collar neighborhood in $X$ that is orientable.

\vspace{.15 in}

\underline{Case 1:} Suppose $\widetilde \gamma$ is completely contained in $C(X)$. Then $Y_i \subset C(X) \cup Z_i$, where $Z_i = \left\{x \in \overline {X - C(X)} : d(x, \alpha_i) \leq 2d_0\right \}$. From the results of Section 4, we know $Area(Z_i) = \sinh(2 \, d_0) \, \ell_i$.

Since $\pi_1(C(X)) \cong \pi_1(X) \cong S$, we know $\chi(C(X)) = 1-n$. Then by the Gauss-Bonnet Theorem \cite{Lee}, we have that $Area(C(X))= 2\pi(n-1)$.

Let $Y = C(X) \cup Y_1 \cup \cdots \cup Y_m$. Then $Y \subset C(X) \cup Z_1 \cup \cdots \cup Z_m$, and $Y$ is a convex union of $k'$ pentagons, where $k' \frac{\pi}{2} = Area(Y) \leq 2\pi(n-1) + \sinh(2 \, d_0) \, \ell_1 + \cdots + \sinh(2 \, d_0) \, \ell_m$. Therefore,
$$k' < 4n - 4 + 8.1 (\ell_1 + \cdots + \ell_m).$$

\vspace{.1 in}

\noindent Note: If any $\alpha_i$ is the image of a line in our tessellation of $\D$, then $S_i$ is automatically convex, so that $Y_i = S_i$. Therefore, $Y_i \subset C( X) \cup Z^* _ i$ where $Z^*_i = \{ x \in \overline{X- C(X)} : d(x, \alpha_i) \leq d_0 \}$, and $Area(Z^*_i) = \sinh (d_0) \, \ell_i \approx 1.55 \, \ell_i$. This gives us the improvement on the bound stated in Theorem 7.1. 

Let $\widetilde Y \subset \D$ be the set of all lifts of the pentagons in $Y$. As before, let $R = \left\{R_{y_i}\!: y_i \;\text{is a side of}\; \widetilde Y \right\}$ be the set of isometries of $\D$ consisting of reflections in the sides, $y_i$ of $\widetilde Y$. Then $\widetilde Y$ is a fundamental domain for the action of $\langle R \rangle$ on $\D$, and $\langle R \rangle < \Gamma$.

Let $W$ be a fundamental domain for the action of $S$ on $\D$ so that $W/S = X$ and $W$ is a union of pentagons in our tessellation of $\D$. Lift each pentagon of $Y$ to one of its lifts so that the result is a connected union of $k'$ pentagons contained in $W$. We call this union of $k'$ pentagons $\overline Y$.

Let $K = \langle R, S \rangle$, and let $X' = \D/K$. The remainder of the proof follows the arguments of Section 5 very closely. By an extension of the reasoning in Section 5, we have that $K = \langle R, S \rangle = \langle R \rangle \rtimes_\phi S$, and $\overline Y$ is a fundamental domain for the action of $K$ on $\D$. Thus, $[\Gamma\!:\!K] = k'$. Since $\widetilde \gamma$ is contained in $C(X) \subset \mathring Y$, all lifts of $\widetilde \gamma$ are contained in the interior of $\overline Y$. The image of $\widetilde \gamma$ is, therefore, not a loop in $X' = \D/K$ and $\gamma \notin K$.

Letting $K' = K \cap \pi_1(\Sigma)$, we have that $S \subset K'$, $\gamma \notin K'$ and
$$[\pi_1(\Sigma)\!:\!K'] \leq [\Gamma\!:\!K] = k' < 4n - 4 + 8.1 (\ell_1 + \cdots + \ell_m).$$

{\bf Explanation of equation (3):} If the rank of $S$ is $n =1$, the cover $X$ corresponding to $S$ is an open annulus or an open M\"{o}bius band as described in Section 2. In this case, $C(X) = \alpha_1$, and $\gamma \subseteq C(X)$. The analog of the sets $Z_i$ defined above is the set $Z= \left\{x \in \overline {X - C(X)} : d(x, \alpha_1) \leq 2d_0\right \}$ = $\left\{ x \in X : d(x, \alpha_1) \leq 2d_0 \right\}$. We know from the calculation in Theorem 4.3 that $Area(Z) = 2 \sinh (2\,d_0) \, \ell_1$, explaining the doubling of the coefficient of $\ell_1$.

\vspace{.05 in}

\underline{Case 2:} Suppose that $\widetilde \gamma$ is not completely contained in $C(X)$. Recall that we chose the basepoint of $X$ to be in $C(X)$ so that one endpoint of $\widetilde \gamma$ is $x_0 \in C(X)$. Then $\widetilde \gamma$ crosses a boundary component of $C(X)$, say $\alpha_1$, at some point and enters $X- C(X)$. Of course $\widetilde \gamma$ may extend past $Y_1$, and thus may not be contained in the convex space $Y$. Thus, we will add pentagons to $Y_1$ until we have encapsulated the portion of the curve $\widetilde \gamma$ in the non-compact region bounded by $\alpha_1$. We repeat this procedure for each $Y_i$ so that we have encapsulated all of $\widetilde \gamma$ in a larger convex union of pentagons and then apply the same method as in Case 1. 

Let $\partial Y_1$ be the portion of the boundary of $Y_1$ contained in $X- C(X)$. Let $U_1$ be the set obtained from $Y_1$ by adding all pentagons in the tessellation of $X$ that intersect $\partial Y_1$. Let $\partial U_1$ be the portion of the boundary of $U_1$ contained in $X-C(X)$. Since $Y_1$ is convex along $\partial Y_1$, $U_1$ will be convex along $\partial U_1$. When we extend $Y_1$ in this fashion we say that we have added \emph{one layer of pentagons} to $Y_1$. If we repeat this procedure for $U_1$, we say that we have added \emph{two layers of pentagons} to $Y_1$, and so on.

It is not hard to see that $d(\partial Y_1, \partial U_1) \geq e_P$, where $e_P \approx 1.062$ is the length of an edge of a pentagon in our tiling. Now, recall that $\ell_\gamma$ is the length of $\gamma$, and hence, the length of $\widetilde \gamma$. Let $Y_i'$ be the set obtained by adding $\ell_\gamma/e_P$ layers of pentagons to $Y_i$, for $i = 1, \dots, m$. Then  $Y' = C(X) \cup Y_1' \cup \cdots \cup Y_m'$ is a convex extension of $Y$, and we can ensure that $\widetilde \gamma$ is contained in the interior of $Y'$.

Let $Z_i' = \left\{x \in \overline{X-C(X)} : d(x, \alpha_i) \leq (\ell_\gamma/e_P + 2) \,d_0 \right\}$. Then, $Y_i' \subset C(X) \cup Z_i'$, and we have that $Area(Z_i') = \sinh [(\ell_\gamma/e_P \,+ \,2) \, d_0] \, \ell_i$. It then follows that if $Y'$ consists of $k''$ pentagons,

$$k'' < 4n - 4 + \frac{2 \, \sinh [(\ell_\gamma/e_P \,+ \,2) \, d_0]}{\pi}\,(\ell_1 + \ell_2 + \cdots \ell_m).$$

We follow the proof of Case 1 replacing the set $Y$ with $Y'$ to obtain a subgroup $K'$, such that $S \subset K'$ and $\gamma \notin K'$. In this case,

$$[\pi_1(\Sigma)\!:\!K'] \leq [\Gamma\!:\!K] = k'' < 4n - 4 + \frac{2 \, \sinh \left[(\ell_\gamma/e_P \, + \, 2) \, d_0\right]}{\pi}\,(\ell_1 + \ell_2 + \cdots + \ell_m).$$

{\bf Explanation of equation (4):} Again, in the case where the rank of $S$ is $n=1$, the analog of the sets $Z_i'$ is $Z' = \left\{x \in \overline{X-C(X)} : d(x, \alpha_1) \leq (\ell_\gamma/e_P + 2) \,d_0 \right\} = \left\{x \in X : d(x, \alpha_1) \leq (\ell_\gamma/e_P + 2) \,d_0 \right\}$ so that $Area(Z') = 2 \sinh [(\ell_\gamma/e_P \,+ \,2) \, d_0] \, \ell_1$, explaining the doubling of the coefficient.  

\end{proof}

We can now prove the case where $\Sigma$ is a compact hyperbolic surface with geodesic boundary. We know that the universal cover of $\Sigma$ is a convex, non-compact polygon in $\D$, which we call $\widetilde \Sigma$. By our comments after Theorem 5.4, we also know that $\pi_1(\Sigma) < \Gamma$ when we consider $\pi_1(\Sigma)$ as a group of isometries of $\D$. 

At each boundary component of $\Sigma$ we will glue in a non-compact region, as in Figure~9 below. We call this new surface $\Sigma'$. 

\begin{figure}[h]
\centering
 \includegraphics[trim = .5in 2.1in .5in 2in, clip=true, totalheight=0.15\textheight]{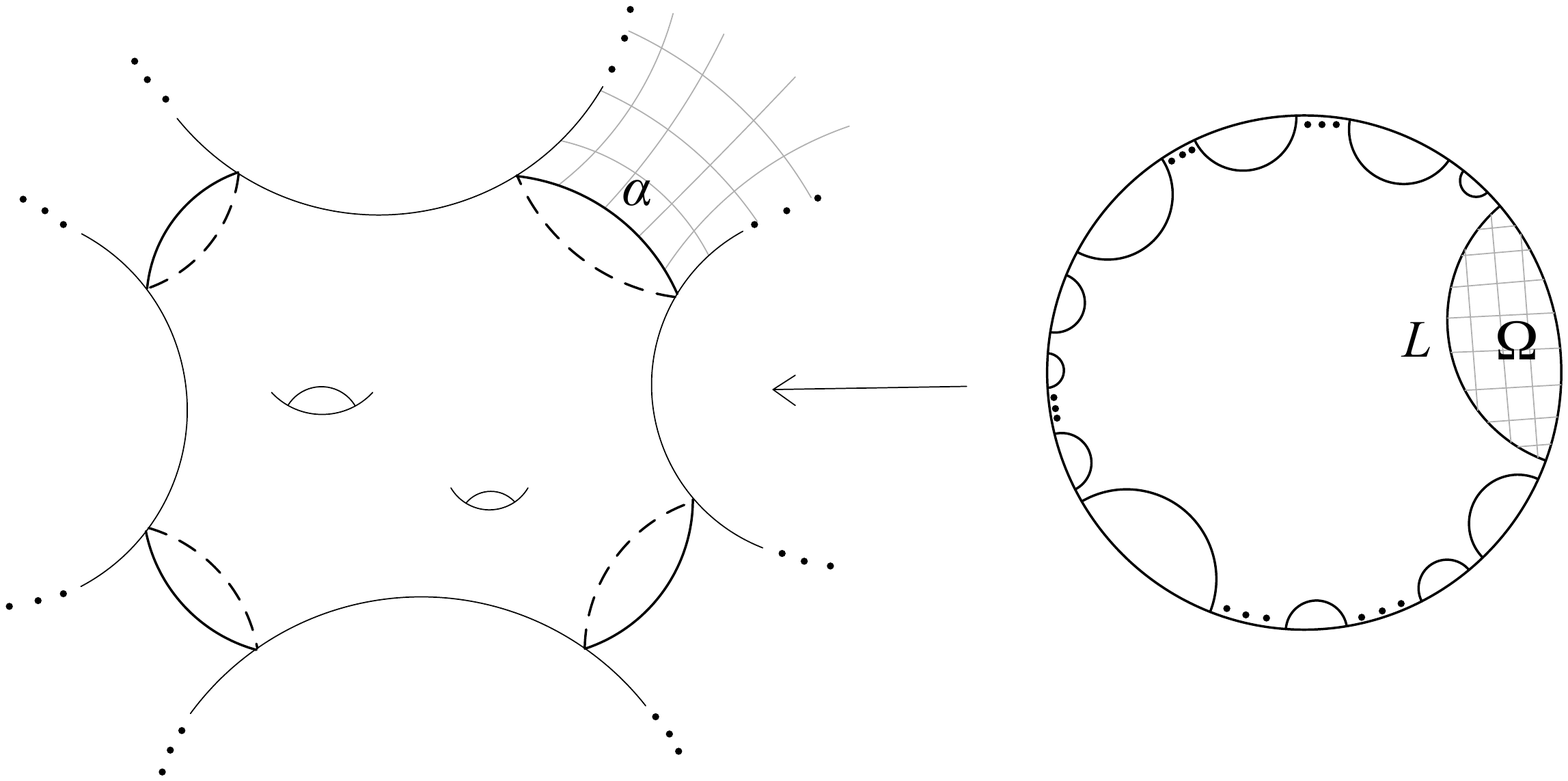}
 \caption{}
\end{figure}

In $\D$, one such gluing along a boundary component, $\alpha$, corresponds to the gluing of the region $\Omega$ to $\widetilde \Sigma$, as  in the figure above. The gluing occurs along the geodesic line $L$, whose image in $\Sigma$ is $\alpha$. Most importantly, $\pi_1(\Sigma) \cong \pi_1(\Sigma')$ and $\pi_1(\Sigma') < \Gamma$. 

Now we can follow the proof of the closed case for $\pi_1(\Sigma')$. Thus, $\pi_1(\Sigma')$ is LERF, and $\pi_1(\Sigma)$ is, therefore, LERF since the groups are isomorphic. The same bounds stated in Theorem 7.1 will hold.

\begin{bibdiv}
\begin{biblist}

\bib{Beardon}{book}{
title={The Geometry of Discrete Groups},
author={Beardon, Alan F.},
date={1983},
publisher={Springer-Verlag},
address={Berlin and New York}
}

\bib{Bou1}{article}{
title={Quantifying Residual Finiteness},
author={K. Bou-Rabee},
journal={Journal of Algebra},
volume={323}
date={2010},
pages={729-737}
}

\bib{Bou2}{article}{
title={Asymptotic growth and least common multiples in
groups},
author={K. Bou-Rabee and D. B. McReynolds},
journal={Bull. London Math. Soc.},
volume={43(6)}
date={2011},
pages={1059-1068}
}

\bib{Buskin}{article}{
title={Efficient separability in free groups},
author={N. V. Buskin},
journal={Siberian Mathematical Journal},
volume={50(4)}
date={2009},
pages={603-608}
}

\bib{Casson}{book}{
title={Automorphisms of Surfaces after Nielsen and Thurston},
author={Andrew J. Casson and Steven A. Bleiler},
date={1988},
publisher={Cambridge University Press},
address={Cambridge}
}

\bib{Hall}{article}{
title={Coset representation in free groups},
author={Hall, Marshall},
journal={Trans. Amer. Math. Soc.},
volume={67},
date={1949},
pages={421-432}
}

\bib{Hubbard}{book}{
title={Teichm\"{u}ller Theory and Applications to Geometry, Topology, and Dynamics},
subtitle={Volume I},
author={Hubbard, John H.},
date={2006},
publisher={Matrix Editions},
address={Ithaca, NY}
}

\bib{Jacobson}{book}{
title={Basic Algebra I},
subtitle={Second Edition},
author={Jacobson, Nathan},
date={1985},
publisher={W. H. Freeman and Company},
address={San Francisco}
}

\bib{Kass}{article}{
title={Bounding the Residual Finiteness of Free Groups},
author={Martin Kassabov and Francesco Matucci},
journal={Proc. Amer. Math. Soc.},
volume={139},
number={7}
date={July 2011},
pages={2281-2286}
}

\bib{Katok}{book}{
title={Fuchsian Groups},
author={Katok, Svetlana},
date={1992},
publisher={University of Chicago Press},
address={Chicago}
}

\bib{Lee}{book}{
title={Riemannian Manifolds: An Introduction to Curvature},
author={Lee, John M.},
date={1997},
publisher={Springer-Verlag},
address={New York, NY}
}

\bib{Rivin}{article}{
title={Geodesics with one self-intersection, and other stories},
author={Igor Rivin},
journal={Preprint},
note={arXiv:0901.2543v4 [math.GT]}
}

\bib{Scott}{article}{
title={Subgroups of Surface Groups Are Almost Geometric},
author={Scott, Peter},
journal={London Math. Soc.},
volume={2 (17)},
date={1978},
pages={555-565}
}

\bib{Scott2}{article}{
title={Correction to `Subgroups of Surface Groups Are Almost Geometric'},
author={Peter Scott},
journal={London Math. Soc.},
volume={2 (32)},
date={1985},
pages={217-220}
}

\bib{Thurston}{article}{
title={The Geometry and Topology of three-dimensional manifolds},
author={W.P. Thurston},
note={Princeton University Notes, 1978}
}

\end{biblist}
\end{bibdiv}

\end{document}